\title{Holomorphic Cartan geometries and rational curves}
\author{Indranil Biswas}
\address{Tata Institute of Fundamental Research, Homi Bhabha Road, 
Mumbai 400005, India}
\email{indranil@math.tifr.res.in}
\author{Benjamin McKay}
\address{University College Cork, Cork, Ireland}
\email{b.mckay@ucc.ie}
\date{\today} 
\subjclass{53C55 (Primary) 53C51, 53C56, 53A55 (Secondary)}
\newcolumntype{L}{>{$}l<{$}}
\newtheorem{theorem}{Theorem}
\newtheorem{corollary}{Corollary}
\newtheorem{lemma}{Lemma}
\newtheorem{proposition}{Proposition}
\theoremstyle{remark}
\newtheorem{definition}{Definition}
\newcommand{\LieDer}{\ensuremath{\EuScript L}}
\newcommand{\hook}{\ensuremath{\mathbin{ \hbox{\vrule height1.4pt
 width4pt depth-1pt \vrule height4pt width0.4pt depth-1pt}}}}
\newcommand{\C}[1]{\ensuremath{\mathbb{C}^{#1}}}
\newcommand{\Proj}[1]{\mathbb{P}^{#1}}
\newcommand{\OO}[1]{
 \ensuremath{
    \mathcal{O}
    \ifthenelse{\equal{#1}{0}}
      {}
      {\of{#1}}
  }
}
\newcommand{\OOp}[2]{
  \ensuremath{
    \mathcal{O}
    \ifthenelse{\equal{#1}{0}}
      {}
      {\left({#1}\right)}
    \ifthenelse{\equal{#2}{1}}
      {}
      {^{\oplus{#2}}}
  }
}
\DeclareMathOperator{\Ad}{Ad}
\newcommand*{\Lm}[2]{\ensuremath{\Lambda^{#1}\!\left({#2}\right)}}
\newcommand*{\nForms}[2]{\ensuremath{\Omega^{#1}\!\left({#2}\right)}}
\newcommand*{\cohomology}[2]{\ensuremath{H^{#1}\of{#2}}}
\newcommand{\LieG}{\ensuremath{\mathfrak{g}}}
\newcommand{\LieH}{\ensuremath{\mathfrak{h}}}
\newcommand{\LieP}{\ensuremath{\mathfrak{p}}}
\newcommand{\RE}[1]{\ensuremath{R_{#1}}}
\newcommand*{\pr}[1]{\ensuremath{\left(#1\right)}}
\newcommand*{\normalBundle}[3][.]%
{%%
\ensuremath{N\ifnum\pdf@strcmp{#1}{.}=\z@_{#3/#2}\else_{#1,#3/#2}\fi}
}%%
\newcommand*{\defeq}{\mathrel{\vcenter{\baselineskip0.5ex \lineskiplimit0pt
                     \hbox{\scriptsize.}\hbox{\scriptsize.}}}%
                     =}
\newcommand*{\of}[1]{\ensuremath{\!\left(#1\right)}}
\newcommand{\amal}[3]{\ensuremath{{#1} \mathbin{\times^{#2}} \! #3}}
\newcounter{remarkCounter}
\newcommand*{\DashedArrow}[1][]{\mathbin{\tikz [baseline=-0.25ex,-latex, dashed,->,densely dashed,#1] \draw [#1] (0pt,0.5ex) -- (1.5em,0.5ex);}}%
\begin{document}
\begin{abstract}
We prove that any compact K\"ahler manifold bearing a holomorphic Cartan geometry contains a rational curve just when the Cartan geometry is inherited from a holomorphic Cartan geometry on a lower dimensional compact K\"ahler manifold. 
This shows that many complex manifolds admit no or few holomorphic Cartan geometries.
\end{abstract}
\maketitle
{\hspace{.8cm}
\begin{minipage}{.7\textwidth}
\tableofcontents
\end{minipage}}

\section{Introduction}

\subsection{Convexity}

A \emph{rational curve} in a complex manifold \(M\) is a non-constant holomorphic map \(f \colon \Proj{1} \to M\); it is \emph{free} if \(f^* TM\) is spanned by its global sections, \emph{ample} if \(f^* TM\) is a sum of line bundles of positive degree.
A complex manifold is \emph{convex} if every rational curve in it is free. 
We will define Cartan geometries in section~\vref{section:Definitions}.
On page~\pageref{proof:thm:FreeRats} we will prove:

\begin{theorem}%
\label{thm:FreeRats}
Any complex manifold \(M\) bearing a holomorphic Cartan geometry is convex.
\end{theorem}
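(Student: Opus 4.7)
My plan is to express $f^{*} TM$ as a holomorphic quotient of a trivial vector bundle on $\Proj{1}$; global generation will then be immediate. First I would use the Cartan connection to identify the Atiyah bundle $TE/H$ with $E \times_H \LieG$ (the bundle associated to $E$ via the adjoint action of $H$ on $\LieG$); quotienting by the vertical directions yields the short exact sequence of holomorphic vector bundles
\[
0 \to E \times_H \LieH \to E \times_H \LieG \to TM \to 0
\]
on $M$, in which the surjection is induced by the projection $\LieG \to \LieG/\LieH$.

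Next I would extend $E$ to the principal $G$-bundle $\hat E := E \times_H G$. The Cartan connection $\omega$ extends canonically to a holomorphic principal $G$-connection $\hat\omega$ on $\hat E$ by the standard recipe $\hat\omega = \Ad_{g^{-1}} \omega + g^{-1} dg$ on $E \times G$, which is $H$-invariant on the diagonal action and hence descends. Crucially, the middle term of the sequence above is naturally isomorphic to $\hat E \times_G \LieG$, the adjoint bundle of $\hat E$, because the $H$-action on $\LieG$ is the restriction of the $G$-action via $\Ad$.

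Then I would pull back by $f : \Proj{1} \to M$. The bundle $f^{*} \hat E$ inherits a holomorphic principal $G$-connection, and since $\Proj{1}$ carries no nonzero holomorphic $2$-forms, this connection is automatically flat. Because $\Proj{1}$ is simply connected, any flat holomorphic principal $G$-bundle on $\Proj{1}$ is holomorphically trivial; hence $f^{*}(E \times_H \LieG) \cong \OOp{0}{\dim G}$, and the pulled-back surjection writes $f^{*} TM$ as a quotient of a trivial bundle, which is automatically spanned by its global sections.

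The main subtlety I anticipate is the construction of $\hat\omega$ and the identification of $E \times_H \LieG$ with the adjoint bundle of the extended $G$-bundle: although $G$ does not act on $\LieG/\LieH$, it does act on $\LieG$ via $\Ad$, which is precisely what makes this associated-bundle identification work and lets the flatness argument on $\Proj{1}$ propagate back to a statement about $TM$. Everything else is a formal consequence of this together with the classical fact that holomorphic connections on $\Proj{1}$ force triviality.
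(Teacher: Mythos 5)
Your argument is correct, and it reaches the same structural conclusion as the paper --- that $f^{*}TM$ is a quotient of a trivial bundle of rank $\dim\LieG$ --- but by a different route. The paper develops the rational curve to the model: since $\Proj{1}$ is simply connected, $f$ develops to a map $f_1 : \Proj{1} \to G/H$ with $f^{*}TM \cong f_1^{*}T(G/H)$, and $T(G/H)$ is globally generated because the infinitesimal left $G$-action gives a surjection from the trivial bundle with fiber $\LieG$. You instead stay on $M$: you extend $\omega$ to a holomorphic principal connection on $E\times_H G$, observe that its pullback to $\Proj{1}$ is flat because a curve carries no nonzero holomorphic $2$-forms, and conclude triviality of $f^{*}\bigl(E\times_H\LieG\bigr)$ from simple connectivity of $\Proj{1}$. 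Your triviality statement is exactly the paper's Lemma~\ref{lemma:adGTrivial}, which the paper also proves via development, so you have in effect supplied an independent proof of that lemma as well. The two arguments are close cousins --- the development of a map from a simply connected Riemann surface is itself constructed by integrating the same flat Pfaffian system you are implicitly using --- but yours is self-contained modulo standard facts about holomorphic connections, whereas the paper's is shorter given that the development machinery is set up anyway for later use. One small point worth making explicit in your write-up: parallel transport for a flat holomorphic principal connection over a simply connected base yields a \emph{holomorphic} trivialization (the horizontal distribution is a holomorphic foliation transverse to the fibers, and its leaves project biholomorphically), which is what lets you conclude that $f^{*}\bigl(E\times_H\LieG\bigr)\cong\OOp{0}{\dim G}$ as a holomorphic bundle and not merely as a smooth one.
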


A proper surjective holomorphic map \(Z \to Y\) between complex spaces is a \emph{modification} if it is a biholomorphism away from some closed complex subvarieties in \(Y\) and \(Z\) of positive codimension.
On page~\pageref{proof:cor:BlowsDown} we will prove:

\begin{corollary}%
\label{cor:BlowsDown}
Suppose that \(f \colon Z \to Y\) is a modification of reduced complex spaces with \(Y\) smooth.
Suppose that there is a smooth point of \(Z\) near which \(f\) is not a local isomorphism.
Then the set of smooth points of \(Z\) bears no holomorphic Cartan geometry.
\end{corollary}
For example, if \(f \colon Z \to Y\) is the blowup of a subvariety \(Y_0 \subset Y\), then over smooth points of \(Y_0\), \(Z\) is smooth and \(f\) is not a local isomorphism, so \(Z\) admits no holomorphic Cartan geometry.
Proposition 2.1 of \cite{McKay:2013} is a similar result.

\subsection{The main theorem}

Any complex space \(X\) has a complex space \(D=D_X\), the \emph{Douady space} of \(X\), and subspace \(Z \subset D \times X\), the \emph{universal flat family}, uniquely determined up to isomorphism so that 
\begin{enumerate}
\item \(Z \to D\) is flat and proper and
\item if \(D'\) is a complex space and \(Z' \subset D' \times X\) is flat and proper over \(D'\) then there exists a unique holomorphic map \(D' \to D\) such that
\(Z' = D' \times_D Z\).
\end{enumerate}
For proof, see \cite[p. 50, Cor. 6.0.3]{Magnusson:2013}.
A rational curve is \emph{tame} if its graph lies in a compact irreducible component of the Douady space of compact complex subspaces of \(\Proj{1} \times M\).
A complex manifold is \emph{tame} if every rational curve in it is tame. 
Compact complex curves and surfaces are tame \cite[p. 387]{Gauduchon:1977}. 
Holomorphic bundles of tame complex spaces with tame base have tame total space.

A \emph{Fujiki} space is a compact complex space whose reduction is the meromorphic (or equivalently, holomorphic) image of a compact K\"ahler manifold \cite{AKMW:2002} \S{}1.2.4.
For any Fujiki space \(S\), each component of the Douady space of any reduced compact complex subspace of \(S\) is a Fujiki space \cite[p. 189]{Fujiki:1982}, hence compact.
If \(S\) is Fujiki then \(\Proj{1} \times S\) is also Fujiki, so Fujiki manifolds are tame.
A reduced and irreducible compact complex space is \emph{Moishezon} if it is bimeromorphic to a smooth complex projective variety \cite[p. 26, Thm 3.6]{Ueno:1975}; Moishezon spaces are Fujiki and so tame.
A \emph{rational homogeneous variety} is a connected and simply connected complex projective variety with a holomorphic, transitive and effective action of a connected complex Lie group.

We will define \emph{dropping} of Cartan geometries on \vpageref{page:drop}.
On page~\pageref{proof:thm:OneCurveForcesDescent} we will prove our main theorem:
\begin{theorem}%
\label{thm:OneCurveForcesDescent}
Take a complex homogeneous space \((X,G)\), \(X=G/H\).
Let \(M\) be a connected tame compact complex manifold with a holomorphic \((X,G)\)-geometry. 
Then there is a holomorphic fiber bundle map \(M \to M'\), the maximal rationally connected fibration \cite[p. 222]{Kollar:1996} of \(M\), such that
\begin{enumerate}
\item\label{item:no.rat}
\(M'\) contains no rational curves and
\item 
every fiber of \(M \to M'\) is biholomorphic to a fixed rational homogeneous variety \(H'/H\) for a unique subgroup \(H \subset H' \subset G\) and
\item 
every holomorphic Cartan geometry on \(M\) pulls back to each fiber of \(M \to M'\) to a holomorphic Cartan geometry equivalent to the model Cartan geometry on \(H'/H\) and
\item
every holomorphic Cartan geometry on \(M\) drops to a holomorphic Cartan geometry on \(M'\) and
\item\label{item:FKP}
if \(M\) is Fujiki, K\"ahler, projective then \(M'\) is Fujiki, K\"ahler, projective and
\item
if the Cartan geometry on \(M\) drops via a holomorphic map \(M \to M''\) then this map factors through unique drops \(M \to M' \to M''\).
\end{enumerate}
\end{theorem}

Clearly \(M'\) is a point just when \(M\) is a rational homogeneous variety with its model geometry.
For example, a connected tame compact complex manifold containing a rational curve and bearing a holomorphic projective or conformal connection is biholomorphic to projective space or a smooth quadric hypersurface with its standard flat projective or conformal Cartan connection.

On page~\pageref{proof:corollary:Moishezon} we will prove:
\begin{corollary}%
\label{corollary:Moishezon}
Any compact Moishezon manifold which bears a holomorphic Cartan geometry is a smooth complex projective variety.
\end{corollary}

On page~\pageref{proof:corollary:ample} we will prove:
\begin{corollary}\label{corollary:ample}
Suppose that \(M\) is a compact connected complex manifold.
Then the following are equivalent:
\begin{enumerate}
\item
The manifold \(M\) bears an ample rational curve and a holomorphic Cartan geometry.
\item
The manifold \(M\) is a rational homogeneous variety. 
Every holomorphic Cartan geometry on \(M\) is the model geometry of \(M\) under the action of some transitive complex subgroup of the biholomorphism group of \(M\), up to isomorphism of complex homogeneous spaces.
\end{enumerate}
\end{corollary}

\subsection{Symmetries}

\begin{proposition}
Suppose that \(M\) is a compact connected Moishezon manifold with a holomorphic Cartan geometry.
Either 
\begin{enumerate}
  \item the geometry drops as in Theorem~\vref{thm:OneCurveForcesDescent} or
  \item \(M\) is an abelian variety, up to finite unramified covering or
  \item the biholomorphism group of \(M\) is finite. 
\end{enumerate}
\end{proposition}
\begin{proof}
Note that \(M\) is a smooth projective variety by Corollary~\vref{corollary:Moishezon}.
If a holomorphic vector field on \(M\) has a zero, and is not everywhere zero, then \(M\) contains a rational curve \cite[p. 319, Cor. 3.5]{Jahnke/Peternell/Radloff:2006}, so the Cartan geometry drops by Theorem~\ref{thm:OneCurveForcesDescent}.
If there is a holomorphic vector field on \(M\) with no zeroes, then the Bott residue formula ensures that \(M\) has a finite unramified covering by an abelian variety.
\end{proof}

On the other hand, suppose that \(M\) is a compact connected complex manifold with a holomorphic Cartan geometry, but \emph{not} a smooth projective variety and so not a Moishezon manifold.
A complex homogeneous space \((X,G)\) is algebraic if \(X\) is a complex algebraic variety with an effective algebraic group action of a complex algebraic group \(G\).
Dumitrescu \cite{Dumitrescu:2011} proved that near the generic point of \(M\) there is a nonzero holomorphic vector field whose flow preserves every holomorphic Cartan geometry on \(M\) with algebraic model and also preserves every holomorphic tensor on \(M\).
In particular, every such Cartan geometry has local symmetries.
On page~\pageref{proof:theorem:bimero} we will prove:
\begin{theorem}\label{theorem:bimero}
Suppose that \(M_0\) and \(M_1\) are tame compact complex manifolds bearing holomorphic Cartan geometries with the same model.
Every bimeromorphism \(M_0 \DashedArrow[<->] M_1\) which identifies the Cartan geometries on a Zariski dense open set extends to a unique biholomorphism.
\end{theorem}

\section{Definitions}\label{section:Definitions}

\subsection{Cartan geometries}

Throughout we use the convention that structure groups of principal bundles act on the right. 
If \(E \to M\) is a principal \(G\)-bundle, denote the \(G\)-action as \(r_g e = eg\), where \(e \in E\) and \(g\in G\). 

\begin{definition}\label{def:CartanConnection}
Take a Lie group \(H\) with Lie algebra \(\LieH\) and an \(H\)-module \(V\) containing \(\LieH\) as an \(H\)-submodule.
A \(V,H\)-geometry on a manifold \(M\) is a choice of \(C^\infty\) principal \(H\)-bundle \(E \to M\) with a smooth 1-form \(\omega \in \nForms{1}{E} \otimes V\) called the \emph{Cartan connection}, so that:
\begin{enumerate}
\item
\(
r_h^* \omega = \Ad_h^{-1} \omega
\) for all \(h \in H\).
\item\label{item:isom}
\(\omega_e \colon T_e E \to V\) is a linear isomorphism at each point
\(e \in E\).
\item
For each \(A \in V\), define a vector field \(\vec{A}\) on \(E\) by
the equation \(\vec{A} \hook \omega = A\); this defines \(\vec{A}\) uniquely by (\ref{item:isom}). 
For every \(A \in \LieH\), the vector field \(\vec{A}\) coincides with the one arising from the right principal bundle action of \(H\).
\end{enumerate}
Take a manifold \(X\) homogeneous for the action of a Lie group \(G\), with a chosen point \(x_0 \in X\) having stabilizer \(H \subset G\).
An \((X,G)\)-geometry, also called a \emph{Cartan geometry} modelled on \((X,G)\), is a \(V,H\)-geometry for \(V\defeq\LieG\) the Lie algebra of \(G\), which is an \(H\)-module by restriction of the adjoint \(G\)-action to \(H\). 
\end{definition}

For any homogeneous space \((X,G)\) with chosen point \(x_0\), the principal bundle \(g \in G \mapsto gx_0 \in X\) is an \((X,G)\)-geometry, called the \emph{model Cartan geometry}, with Cartan connection \(\omega=g^{-1} \, dg\), the left invariant Maurer--Cartan 1-form on \(G\).

Suppose that \(H \subset H'\) is a closed subgroup, and \(E \to M'\) is a \(V,H'\)-geometry.
Then \(E \to M \defeq E/H\) is a \(V,H\)-geometry with the same Cartan connection as the original \(V,H'\)-geometry.
Moreover \(M \to M'\) is a fiber bundle with fiber \(H'/H\).
The geometry \(E \to M\) is called the \emph{\(V,H\)-lift} of \(E \to M'\) (or simply the \emph{lift}). 
Conversely, a given \(V,H\)-geometry \emph{drops}\label{page:drop} to a certain \(V,H'\)-geometry if it is isomorphic to the \(V,H\)-lift of the \(V,H'\)-geometry in question.
For example, if \(X\) is connected homogeneous \(G\)-space then an \((X,G)\)-geometry on a connected manifold drops to a \((*,G)\)-geometry (where \(*\) is a point with trivial \(G\)-action) just when the geometry is isomorphic to its model.

A \emph{complex homogeneous space} is a complex manifold \(X\) with transitive holomorphic action of a complex Lie group \(G\).
A Cartan geometry modelled on a complex homogeneous space is \emph{holomorphic} if the bundle is a holomorphic principal bundle and the Cartan connection is a holomorphic 1-form.
Isomorphisms of holomorphic Cartan geometries are biholomorphic.

\subsection{Connections and Cartan connections}

Let \(\omega\) be the Cartan connection of a holomorphic Cartan geometry \(E \to M\) modelled on a homogeneous space \(X=G/H\).
The Lie algebra \(\LieG\) is equipped with the adjoint action of \(H\), so \(\LieG/\LieH\) is also an \(H\)-module.

\begin{lemma}%
[{\cite[p. 188, Theorem 3.15]{Sharpe:1997}}]%
\label{lemma:TgtBundle}
The Cartan connection of any Cartan geometry \(\pi \colon E \to M\) determines isomorphisms for any points \(m \in M\) and \(e \in E_m\)
\[
\begin{tikzcd}
0 \arrow{r} & \ker \pi'(e) \arrow{r} \arrow{d}{\sim} & T_e E \arrow{r} \arrow{d}{\sim} & T_m M  \arrow{r} \arrow{d}{\sim} & 0 \\ 
0 \arrow{r} & \LieH \arrow{r} & \LieG \arrow{r} & \LieG/\LieH  \arrow{r} & 0 
\end{tikzcd}
\]
Thus \(TM\) is identified with the vector bundle \(\amal{E}{H}{\of{\LieG/\LieH}}\)
associated to the principal \(H\)-bundle \(E\) for the \(H\)-module \(\LieG/\LieH\).
\end{lemma}

Note that \(E\times \pr{\LieG/\LieH}\) is a quotient bundle of \(TE\,=\, E\times \LieG\). 
Consider the \(\LieG\)--valued 2--form
\begin{equation}\label{2-form-i-a}
\nabla \omega = d \omega + \frac{1}{2}\left[\omega,\omega\right]
\end{equation}
on \(E\).
Lemma \ref{lemma:TgtBundle} identifies \(\nabla \omega\) with an \(H\)-equivariant function \(K \colon E \to \LieG \otimes \Lm{2}{\LieG/\LieH}^*\), called the \emph{curvature} of the Cartan connection. 
A Cartan geometry is called \emph{flat} if \(K=0\).

Given any \(H\)-bundle \(E \to M\), and an inclusion of Lie groups \(H \subset G\), we define on \(E \times G\) the \(G\)-action \((e,g)g'=\pr{e,gg'}\) and the \(H\)-action \((e,g)h=\pr{eh,h^{-1}g}\).
Let \(E_G \defeq \amal{E}{H}{G}\) be the quotient by the \(H\)-action, a principal right \(G\)-bundle over \(M\).
Given a Cartan connection \(\omega\) on \(E\), we let 
\[
\omega_G \defeq \Ad_g^{-1} \omega + g^{-1} \, dg,
\]
and check that \(\omega_G\) is \(H\)-invariant and \(G\)-equivariant, and is the pullback of a unique connection, also denoted \(\omega_G\), on \(E_G\).
Let \(\amal{E}{H}{\LieG}\) be the vector bundle over \(M\) associated to
the principal \(H\)-bundle \(E\) for the \(H\)-module \(\LieG\).
The curvature \(K\) descends 
to a holomorphic section of \(\pr{\amal{E}{H}{\LieG}} \otimes \nForms{2}{M}\) on \(M\), which we also denote by \(\nabla \omega\). 
The section \(\nabla \omega\) is the curvature of the connection \(\omega_G\) on \(E_G\).

\subsection{Development}\label{subsection:Development}

We will define the well known concept of developing map \cite[p. 3]{Goldman:2010}, \cite[p. 368]{Sharpe:1997}.
Take a complex homogeneous space \((X,G)\) and a holomorphic Cartan geometry \(H \to E_H \to M\) with Cartan connection \(\omega\).
An \emph{integral} of the Cartan geometry is a morphism \(f \colon Z \to M\) of complex spaces so that the holomorphic connection \(\omega_G\) on \(E_G \defeq \amal{E_H}{H}{G}\) pulls back to a flat connection on \(f^* E_G \to Z\).

Suppose that \(p \colon Z \to Y\) is a holomorphic map of complex analytic spaces.
Take a holomorphic map \(f \colon Z \to M\)  to a complex manifold \(M\). 
Let \(Z_y \defeq p^{-1}(y) \subset Z\) and let \(f_y=\left.f\right|_{Z_y}\).
If \(Z_y\) is simply connected and \(f_y \colon Z_y \to M\) is an integral for every \(y \in Y\), say that 
\[
\begin{tikzcd}
Z \arrow{r}{f} \arrow{d}{p} & M \\
Y
\end{tikzcd}
\]
is a \emph{family of integrals}.
Since every fiber \(Z_y\) is simply connected, each bundle \(f_y^*E_G \to Z_y\) is \(G\)-equivariantly trivial.
Choice of one point of \(f_y^* E_G\) picks out a particular trivialization over the fiber \(Z_y\).

Suppose that \(s \colon Y \to f^* E_G\) is a holomorphic section of the composition \(f^*E_G \to Z \to Y\).
Call the data
\[
\begin{tikzcd}
f^*E_H \arrow{r} \arrow{d} & E_H \arrow{d}{\pi} & H \arrow{l} \\
Z \arrow{r}{f} \arrow{d}{p} & M \\
Y \arrow[bend left]{uu}{s}
\end{tikzcd}
\]
a \emph{framed family of integrals} with \emph{frame} \(s\). 
A \emph{bundle developing map} on a framed family of integrals  is a holomorphic  \(H\)-equivariant map \(f^*E_H \to G\) so that 
\begin{enumerate}
\item the bundle developing map takes the image of the frame \(s\) to \(1\) and 
\item above each fiber \(Z_y\) the graph of the bundle developing map is a leaf of the foliation \(dg \, g^{-1} = -\omega\).
\end{enumerate}
Quotienting the bundle developing map by \(H\)-action gives the \emph{developing map} \(f_1 \colon Z \to X\) of the framed family of integrals.
\begin{lemma}
Every framed family of integrals has a unique bundle developing map.
\end{lemma}
\begin{proof}
The choice of point \(s(y) \in f_y^* E_G\) picks out a particular trivialization of the bundle \(f^* E_G \to Z\); identify \(Z\) with its image in \(f^*E_G\) by that trivialization.
The principal \(H\)-bundle \(f^*E_H \times G \to f^*E_G\) pulls back to that copy of \(Z \subset f^*E_G\) to a principal \(H\)-bundle \(B \to Z\).
The projection \(f^*E_H \times G \to f^*E_H\) onto the first factor restricts to \(B\) to an isomorphism \(B=f^*E_H\).
So \(B\) sits in \(f^*E_H \times G\) as just the graph of an \(H\)-equivariant map \(f^*E_H \to G\), our candidate for the bundle developing map.
Because we constructed our bundle \(B\) above a parallel section \(Z \subset f^*E_G\), we have \(\omega_G=0\) on \(B\), i.e. the bundle developing map satisfies \(0=\Ad_g^{-1} \omega + g^{-1} \,dg\), which we can write as \(dg \, g^{-1} = -\omega\).
Retracing our steps, we uniquely determine the bundle developing map of any framed family of integrals by the requirements given above.
\end{proof}

\begin{lemma}\label{remark:TMZeroEqualsTMOne}
Take a complex manifold \(M\) with Cartan geometry \(E \to M\) modelled on a complex homogeneous space \((X,G)\), and a framed family \(f\) of integrals with developing map \(f_1\).
The following pullback bundles have canonical isomorphisms as indicated:
\begin{align*}
f^* E &\cong f_1^* G \text{ as holomorphic principal \(H\)-bundles}, \\
     f^* TM &\cong f_1^* TX  \text{ as holomorphic vector bundles}, \\
     f^* \pr{\amal{E}{H}{\LieG}} &\cong f_1^* \pr{\amal{G}{H}{\LieG}} \text{as holomorphic vector bundles with connection}.
\end{align*}
\end{lemma}
\begin{proof}
Let \(E_H \defeq E\).
Note that \(E_H \times^H \LieG = E_G \times^G \LieG\) is a holomorphic vector bundle with holomorphic connection \(\omega_G\).
By lemma~\vref{lemma:TgtBundle}, it suffices to identify \(f^* E_H=f_1^*G\) as principal \(H\)-bundles.
The bundle developing map is precisely the required isomorphism.
\end{proof}

\section{Development of rational curves}%
\label{section:DevelopmentOfCurves}
Note that any family of simply connected complex curves in a complex manifold with Cartan geometry is a family of integrals, because the curvature of the holomorphic connection \(\omega_G\) is a holomorphic 2-form valued in a vector bundle, and so vanishes on curves.

We now prove Theorem \ref{thm:FreeRats}.

\begin{proof}%
\label{proof:thm:FreeRats}
Suppose that \(E \to M\) is a holomorphic Cartan geometry, say with model \((X,G)\).
The developing map \(f_1 \colon \Proj{1} \to X\) has associated isomorphism \(f^*TM \to f_1^* TX\).
The Lie algebra \(\mathfrak g\) of \(G\) acts on \(X\) as a collection of holomorphic
vector fields. 
Since the action of \(G\) on \(X\) is transitive, the tangent space of \(X\) at each point is spanned by the image of \(\mathfrak g\).
In particular, \(TX\) is spanned by its global holomorphic sections.
Pulling back those sections via \(f_1\), the vector bundle \(f_1^* TX\) is also spanned by its global holomorphic sections, i.e. \(f_1\) is free.
By Lemma \ref{remark:TMZeroEqualsTMOne}, the vector bundle \(f^* TM = f_1^* TX\) is also spanned by its global holomorphic sections, so \(f\) is also free.
\end{proof}

\begin{lemma}%
\label{lemma:adGTrivial}
Suppose that \((X,G)\) is a complex homogeneous space \(X=G/H\) and that \(E \to M\) is an \((X,G)\)-geometry, and \(f \colon \Proj{1} \to M\) is a rational curve. 
Then the holomorphic vector bundle \(f^*\of{\amal{E}{H}{\LieG}} \,=\, \amal{(f^*E)}{H}{\LieG}\)
on \(\Proj{1}\) is trivial. 
\end{lemma}

\begin{proof}
Every holomorphic vector bundle on \(\Proj{1}\) splits into a direct sum of holomorphic line bundles \cite[p. 122, Th\'eor\`eme 1.1]{Grothendieck:1957}.
Since \(f^*\of{\amal{E}{H}{\LieG}}\) admits a holomorphic connection, all direct summands of \(f^*\of{\amal{E}{H}{\LieG}}\) have degree zero \cite[p. 201, Thm. 8]{Atiyah:1957}. 
Therefore, the holomorphic vector bundle \(f^*\of{\amal{E}{H}{\LieG}}\) is trivial.
\end{proof}

\begin{lemma}%
\label{lemma:UnobstructedDeformation}
Suppose that \(M\) is a complex manifold bearing a holomorphic Cartan geometry. 
Then every rational curve in \(M\) is a smooth point of the space of all of its deformations.
\end{lemma}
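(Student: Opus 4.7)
The plan is to identify the deformation space of a rational curve $f \colon \Proj{1} \to M$ with a Hom scheme (or a moduli of maps) whose Zariski tangent space at $[f]$ is $\Cohom{0}{\Proj{1}, f^*TM}$ and whose obstructions live in $\Cohom{1}{\Proj{1}, f^*TM}$. Once this is set up, smoothness of the deformation space at $[f]$ reduces to the vanishing of $\Cohom{1}{\Proj{1}, f^*TM}$, which I will extract directly from the freeness theorem proved earlier.

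First I would recall the standard deformation theory of maps from a fixed curve to a fixed target (see e.g.\ the stable-map references already cited in the excerpt): the space of deformations of $f$ is locally cut out inside a smooth ambient by equations whose Jacobian is controlled by $\Cohom{1}{\Proj{1}, f^*TM}$, so vanishing of this $H^1$ implies that $[f]$ is a smooth point, with local dimension equal to $\dim \Cohom{0}{\Proj{1}, f^*TM}$.

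Next I would invoke Theorem~\vref{thm:FreeRats}, which tells us that $f^*TM$ is spanned by its global sections. Combining this with Grothendieck's splitting theorem (Remark~\vref{remark:OO}), one has
\[
f^*TM \;\cong\; \bigoplus_{i=1}^{\dim M} \OO{d_i},
\]
and the fact that this bundle is globally generated forces $d_i \ge 0$ for every $i$. The cohomology of line bundles on $\Proj{1}$ then gives
\[
\Cohom{1}{\Proj{1}, f^*TM} \;=\; \bigoplus_{i} \Cohom{1}{\Proj{1}, \OO{d_i}} \;=\; 0,
\]
since $\Cohom{1}{\Proj{1}, \OO{d}} = 0$ for every $d \ge -1$.

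The only delicate point is really just citing the right deformation-theoretic package: one must be a bit careful that the obstruction space one writes down is genuinely $\Cohom{1}{\Proj{1}, f^*TM}$ (as opposed to something involving the sheaf of derivations on the source, which is not an issue here since we fix the source $\Proj{1}$ and deform only the map). Once that is in place, the vanishing above immediately yields smoothness of the deformation space at $[f]$, completing the proof.
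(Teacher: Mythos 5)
Your argument is correct and is essentially the paper's own proof: the paper simply cites the Horikawa deformation theorem after invoking freeness, and Horikawa's criterion is exactly the vanishing of the obstruction space $\Cohom{1}{\Proj{1}, f^*TM}$, which you deduce from global generation and the Grothendieck splitting. You have merely unpacked the citation.
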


\begin{proof}
Since every rational curve is free, we can apply Horikawa's deformation theorem \cite[p. 649]{Horikawa:1974}.
\end{proof}

The following lemma is essentially Theorem 2 of \cite[p. 55]{Hwang/Mok:1997}.

\begin{lemma}%
\label{lemma:TgtKillsCurvature}
Suppose that \(M\) is a complex manifold bearing a holomorphic Cartan geometry and \(f \colon \Proj{1} \to M\) is a rational curve. 
Fix a decomposition \(f^* TM = \bigoplus_{d \ge 0} \OOp{d}{p_d}\).
Let \[T^+_f \defeq \bigoplus_{d > 0} \OOp{d}{p_d} \subset f^* TM\] be the sum of all
positive degree line subbundles of \(f^* TM\). 
Then \(T^+_f \subset f^* TM\) is a  holomorphic vector bundle, containing \(f_* T\Proj{1}\) (and so
of rank at least one). 
Moreover, if \(v \in T^+_f\), then \(v \hook f^* \nabla \omega=0\), where \(\nabla \omega\) as before denotes the curvature of any Cartan geometry on \(M\).
\end{lemma}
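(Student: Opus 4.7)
My plan is to pull everything back to $\Proj{1}$ and exploit the fact that holomorphic maps between line bundles on $\Proj{1}$ of incompatible degrees must vanish.

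First, I would verify the easy structural claims. By Theorem~\ref{thm:FreeRats} every rational curve is free, so the decomposition $f^* TM = \bigoplus_{d \ge 0} \OOp{d}{p_d}$ really does have only nonnegative summands, and $T^+_f$ is obviously a holomorphic subbundle. Next, since $f$ is nonconstant, $df : T\Proj{1} \to f^*TM$ is a nonzero map from $\OO{2}$. Composing with the projection $f^*TM \to f^*TM / T^+_f = \OOp{0}{p_0}$ yields a map $\OO{2} \to \OOp{0}{p_0}$, which must be zero because $H^0(\Proj{1}, \OO{-2}) = 0$. Hence the image of $df$, which is $f_* T\Proj{1}$, lies inside $T^+_f$; in particular $T^+_f$ has rank at least one.

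The main step is the curvature vanishing. The curvature $\nabla\omega$ is a holomorphic section of $(E \times^H \LieG) \otimes \Lm{2}{T^*M}$, and contraction makes it a holomorphic bundle map
\[
c : f^*TM \longrightarrow f^*T^*M \otimes f^*(E \times^H \LieG)
\]
on $\Proj{1}$. I want to show $c$ vanishes on $T^+_f$. By Lemma~\vref{lemma:adGTrivial}, the factor $f^*(E \times^H \LieG)$ is the trivial bundle $\OOp{0}{\dim G}$, so the target of $c$ is
\[
f^*T^*M \otimes \OOp{0}{\dim G} \;=\; \bigoplus_{d \ge 0} \OOp{-d}{\,p_d \dim G},
\]
a direct sum of line bundles of nonpositive degree. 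Restricted to the source $T^+_f = \bigoplus_{d > 0} \OOp{d}{p_d}$, every component of $c$ is a holomorphic map $\OO{d} \to \OO{-e}$ with $d \ge 1$ and $e \ge 0$, i.e., a global section of $\OO{-d-e}$ of strictly negative degree. All such sections vanish, so $c\bigr|_{T^+_f} = 0$, which is exactly the statement $v \hook f^*\nabla\omega = 0$ for $v \in T^+_f$.

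The only real content is the triviality of $f^*(E \times^H \LieG)$, which is Lemma~\vref{lemma:adGTrivial} and already established; everything else is a degree count on $\Proj{1}$. The potential subtlety is only making sure the contraction is correctly viewed as a holomorphic bundle map landing in a bundle of nonpositive-degree summands, so that Grothendieck's splitting theorem together with the vanishing of $H^0(\Proj{1}, \OO{k})$ for $k<0$ can be applied termwise. There is no essential obstacle beyond keeping track of the tensor factors.
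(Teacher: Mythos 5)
Your proposal is correct and follows essentially the same route as the paper: freeness from Theorem~\ref{thm:FreeRats}, the triviality of the quotient $(f^*TM)/T^+_f$ to force $f_*T\Proj{1}\subset T^+_f$, Lemma~\ref{lemma:adGTrivial} to trivialize $f^*(E\times^H\LieG)$, and a termwise degree count showing that $\OOp{0}{\dim G}\otimes\left(T^+_f\right)^*\otimes f^*T^*M$ has no nonzero holomorphic sections. The only difference is cosmetic (you phrase the contraction as a bundle map rather than as a projection on global sections), so nothing further is needed.
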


\begin{proof}
By Theorem \ref{thm:FreeRats}, \(f\) is free. 
Therefore, \(f^*TM\) is a direct sum of holomorphic line bundles of nonnegative degree, and \(T^+\) is a vector subbundle. 
Since \(f\) is not constant, \(f' \colon T\Proj{1} = \OO{2} \to f^* TM\) is not a zero homomorphism, so \(f^* TM\) contains a line subbundle of degree at least 2. 
Since the quotient \((f^*TM)/T^+_f\) is a trivial vector bundle, there is no nonzero homomorphism from \(\OO{2}\) to \((f^*TM)/T^+_f\). 
Consequently, \(f'(T\Proj{1})\) is contained in \(T^+_f\).

By Lemma \ref{lemma:adGTrivial}, the pullback \(f^* \nabla \omega\) is a section of \(\OOp{0}{\dim G} \otimes {f^* T^*M}\otimes {f^* T^*M}\). 
Any vector \(v \in T^+_f\) extends to a holomorphic section \(\tilde{v}\) of \( T^+_f\), and \(\tilde{v} \hook f^* \nabla \omega\) is a holomorphic section of \(\OOp{0}{\dim G} \otimes {f^* T^*M}\), which is a direct sum
of holomorphic line bundles of nonpositive degree. 
But \(\tilde{v}\) vanishes at some point, and hence the section \(\tilde{v} \hook f^* \nabla \omega\) vanishes at that point.
Being a section of a line bundle of nonpositive degree, \(\tilde{v} \hook f^* \nabla \omega\) vanishes identically because it vanishes at some point.
\end{proof}

\section{The local consequences of rational curves}%
\label{section:Local}

\begin{theorem}%
\label{theorem:local}
Suppose that \((X,G)\) is a complex homogeneous space, \(M\) is a connected complex manifold containing a rational curve, and \(E \to M\) is a holomorphic \((X,G)\)-geometry.
Then there is a complex Lie subgroup \(H' \subset G\) strictly containing \(H\), and a covering of \(M\) by  open subsets \(M_{\alpha}\) so that:
\begin{enumerate}
\item 
The restriction of the \((X,G)\)-geometry to \(M_{\alpha}\) is locally isomorphic to the lift of a holomorphic \(\LieG,H'\)-geometry.
\item
The subbundle \(\amal{E}{H}{\pr{\LieH'/\LieH}} \subset \amal{E}{H}{\pr{\LieG/\LieH}}=TM\) is the tangent bundle of a holomorphic foliation.
\item
The \((X,G)\)-geometry restricts to a flat holomorphic \((X',H')\)-geometry (with \(X'=H'/H\)) on each leaf of the foliation.
\item
Every rational curve in \(M\) lies inside one of the leaves of the foliation.
\end{enumerate}
\end{theorem}

\begin{proof}
Denote the bundle map \(E \to M\) as \(\pi \colon E \to M\).
Inductively define covariant derivatives of the curvature 
\[
\nabla^p K \colon E \to \LieG \otimes \Lm{2}{\LieG/\LieH}^* \otimes (\otimes^p \LieG^*)
\]
by \(\nabla^0 K = K\) and \(d \nabla^p K = \nabla^{p+1}K \omega\).
Pick a point \(m \in M\) and a point \(e \in E_m \subset E\). 
Take any \(v_1, v_2, w \in T_m M\). 
Take any vectors \(A_1, A_2, B \in \LieG\) such that
\[
\pi'(e) \vec{A}_j = v_j \text{ and } \pi'(e) \vec{B} = w\, ,
\]
where \(\pi' \colon TE\, \rightarrow\, TM\) is the differential of \(\pi\). 
If \(K\left(A_1,A_2\right)=0\) and \(B \in \LieH\), then
\[
\nabla^1 K\left(A_1,A_2\right)B
=\left.\frac{d}{dt}\right|_{t=0} r_{e^{t\vec{B}}} K\left(A_1,A_2\right) =0\, .
\]
Therefore if \(0=v_1 \hook \nabla \omega\) then \(\nabla^1 K\left(A_1,A_2\right)B\) only depends on \(B\) modulo \(\LieH\).
Define \(\nabla^1 \omega\left(v_1,v_2\right)w\) to be the associated element of \(\pr{\amal{E}{H}{\LieG}} \otimes \Lm{2}{T^*M} \otimes T^*M\).
Inductively, if \(0=v_1 \hook \nabla^p\omega\) we define
\[
\nabla^{p+1} \omega\left(v_1, v_2\right)\left(w_1,w_2,\dots,w_{p+1}\right)
=\nabla^{p+1} K\left(A_1,A_2\right)\left(B_1,B_2,\dots,B_{p+1}\right),
\]
which is an element of
\[
\amal{E}{H}{\pr{\LieG \otimes \Lm{2}{\LieG/\LieH}^* \otimes
\bigotimes^p \left(\LieG/\LieH\right)^*}}
=
\pr{\amal{E}{H}{\LieG}} 
\otimes 
\Lm{2}{T^*M} 
\otimes
\bigotimes^p T^*M.
\]

Take \(v_1\) tangent to a rational curve in \(M\). Then
\[
0 = \nabla^p \omega\left(v_1,v_2\right)\left(w_1,w_2,\dots,w_p\right)
\]
for all vectors \(v_2, w_1, \dots, w_p \in T_m M\), for all \(p\)
(see the proof of Lemma~\ref{lemma:TgtKillsCurvature}).
Consequently, if \(A \in \LieG\) projects to some nonzero vector \(v \in T_m M\) tangent to a rational curve,
then \(A \hook K\) vanishes with its derivatives of all orders at every point of the fiber \(E_m\).
Therefore \(A \hook K\) vanishes everywhere.
Let \(\LieH'\) be the set of all vectors \(A \in \LieG\) so that \(A \hook K=0\) everywhere.
Note that \(\LieH \subset \LieH'\) and that \(\LieH'\) is \(H\)-invariant, because \(K\) transforms in the adjoint representation under \(H\)-action.
Let \(H' \defeq H \exp \LieH' \subset G\).
Because \(\LieH\) is \(H\)-invariant, \(H'\) is invariant under conjugation by elements of \(H\).
So \(H'\) is the smallest subgroup of \(G\) containing \(H\) and having Lie algebra \(\LieH'\), and \(H'\) has a countable set of components, no larger than the number of components of \(H\).
So \(H' \subset G\) is a Lie subgroup, perhaps not embedded, with Lie algebra \(\LieH'\) \cite{Yamabe:1950}.
Lemma~\ref{lemma:TgtBundle} identifies \(\amal{E}{H}{\pr{\LieG/\LieH}}=TM\) and thereby identifies \(\amal{E}{H}{\pr{\LieH'/\LieH}} \subset  \amal{E}{H}{\pr{\LieG/\LieH}}\) with a holomorphic vector subbundle \(\mathcal{V} \subset TM\).
The result follows from \cite[p. 15]{Cap:2006} (unnumbered theorem) where our notation \(E, H, H', K\) is \v{C}ap's \(\mathcal{G}, Q, P, \kappa\).
\end{proof}

\begin{corollary}
Suppose that \(H \subset G\) is a closed complex sugroup of a complex Lie groups and the associated Lie algebra \(\LieH \subsetneq \LieG\) is a maximal proper complex Lie subalgebra.
Suppose that \(E \to M\) is a holomorphic \((X,G)\)-geometry on a connected complex manifold, \(X=G/H\), and \(M\) contains a rational curve. 
Then the \((X,G)\)-geometry is flat and locally isomorphic to the model Cartan geometry
on \(X\).
\end{corollary}

For example, on any connected complex manifold containing a rational curve, every holomorphic conformal or projective connection is flat.

\begin{corollary}\label{corollary:ample.rational}
Suppose \((X,G)\) is a complex homogeneous space and that \(E \to M\) is a holomorphic \((X,G)\)-geometry on a connected complex manifold.
Suppose that \(M\) contains an ample rational curve.  
Then the \((X,G)\)-geometry is flat and locally isomorphic to the model Cartan geometry
on \(X\).
\end{corollary}

\begin{proof}
The ample ambient tangent bundle along the rational curve prevents the rational curve from sitting in the leaves of a foliation.
\end{proof}

\subsection{Families of rational curves and trees}

A \emph{tree of projective lines} is a proper nodal curve of arithmetic genus zero, or in other words, it is a compact, connected and simply connected reduced complex projective curve with only nodal singularities such that each irreducible component is a projective line.
A \emph{rational tree} in a complex manifold \(M\) is a nonconstant morphism \(f \colon T \to M\) from a tree of projective lines.

\begin{definition}[\cite{Bien/Borel/Kollar:1996}, p. 106, Definition 1.1]
A \emph{family of rational trees through a point} \(m_0 \in M\) in a complex manifold is a triple of the form \((Z \to Y\, ,s\, ,f)\), where \(Z \to Y\) is a proper flat morphism of complex spaces such that the generic fiber is a tree of projective lines, \(s \colon Y \to Z\) is a regular section and \(f \colon Z \to M\) is a morphism, such that \(f(s(y))=m_0\) for all \(y \in Y\).
\end{definition}

A \emph{rational chain} of length \(\ell\) in a complex space \(X\) is a rational tree whose irreducible components \(C_1, C_2, \dots, C_{\ell}\) have \(C_i\) only intersecting \(C_1 \cup \dots \cup C_{i-1}\) at a single point of \(C_{i-1}\) and at no point of \(C_1 \cup \dots \cup C_{i-2}\).
Two points \(m\) and \(m'\) of a complex space \(M\) are \emph{chain equivalent} if they lie in the image of a rational chain.
The \emph{rational envelope} \(\RE{m}\) of a point \(m\) is its chain equivalence class.

\begin{lemma}[\cite{Araujo/Kollar:2003} p. 25, \cite{Kollar:1996} p. 115, Cor. 3.5.3, Thm. 7.6]\label{lemma:tree.to.curve}
Every rational tree with marked point \(m_0\) in a convex complex manifold \(M\) has image arising as a fiber in a flat family of rational curves in \(M\) all passing through \(m_0\).
\end{lemma}

\begin{lemma}\label{lemma:tree.image}
In any flat proper family of rational trees in a complex manifold \(M\), every fiber has the same image in \(M\) as some rational tree.
\end{lemma}
\begin{proof}
In a flat family of rational trees, some fiber might have nonnodal singularities, failing thereby to be a rational tree.
By Grauert's theorem \cite[p. 207 Thm. 6]{Grauert/Remmert:1984} \cite[p. 10, Thm. 2.3]{Bell/Narasimhan:1990}, the dimension, topological Euler characteristic, number of components and arithmetic genus are preserved in a flat family of compact complex curves.
Therefore every irreducible component is the image of a rational curve.
The fiber is the image of a proper curve consisting of projective lines, perhaps with nonnodal singularities.
By introducing extra irreducible components so that the nonnodal singularities are resolved to nodal singularities, every fiber is thus the image of a rational tree, perhaps in many ways.
\end{proof}

\section{Rational envelopes in convex manifolds}\label{section:freedom}

We received essential assistance in writing the following proof from J\'anos Koll\'ar.

\begin{proposition}\label{prop-kollar}
Let \(M\) be a tame convex compact complex manifold. 
Every rational envelope in \(M\) is an irreducible closed analytic subvariety of \(M\).
For each point \(m_0 \in M\), there is a flat family of rational trees \(Z_{m_0} \to Y_{m_0}\) whose generic nonempty fiber over the Douady space is a rational chain through \(m_0\), and whose every fiber is the image of a rational chain through \(m_0\), so that these fibers reach every point of the rational envelope \(\RE{m_0}\), i.e. \(Z_{m_0} \to \RE{m_0}\) is onto.
\end{proposition}

\begin{proof}
The idea of the proof of the proposition is that of Theorem 4.13 in \cite[p. 215]{Kollar:1996}.
Take any point \(m_0 \in M\). 
Any point of the rational envelope \(\RE{m_0}\) is connected to \(m_0\), so \(\RE{m_0}\) is connected.
Let \(Y\,\subset\, M\) be a maximal closed irreducible analytic subvariety of \(M\) contained in \(\RE{m_0}\).

Suppose that we can prove that for any rational curve \(g \colon \Proj{1} \to M\), if \(m_0 \in g\of{\Proj{1}}\) then \(g\of{\Proj{1}}\subset Y\).
If \(\RE{m_0}\) contains two maximal closed irreducible analytic subvarieties \(Y_1\, , Y_2\, \subset\, M\) such that \(m_0' \in Y_1\bigcap Y_2\) and \(m_0 \notin Y_1\bigcap Y_2\), then replace \(m_0\) by \(m_0'\).
We then have every rational curve through \(m_0\) contained in both \(Y_1\) and \(Y_2\), so \(\RE{m_0} \subset Y_1 \cap Y_2 \subset Y_1 \cup Y_2 \subset \RE{m_0}\) so \(Y_1=Y_2\).
Therefore \(\RE{m_0}\) is an irreducible closed analytic subvariety of \(M\). 

Suppose that there is a rational curve \(g \colon \Proj{1} \to M\) with image not contained in \(Y\) and with \(m_0 \in g\of{\Proj{1}}\);  we look for a contradiction.
Let \(\Gamma \subset \Proj{1} \times M\) be the graph of that rational curve.
By lemma~\ref{lemma:UnobstructedDeformation}, \(\Gamma\) is a smooth point of the Douady space of compact complex subspaces of \(\Proj{1} \times M\), so lies in a unique component \(D\).
Denote by \(Z \subset D \times \Proj{1} \times M\) the universal flat family over \(D\).
By convexity of \(\Proj{1} \times M\), every rational curve in \(M\) is free, and its graph is reduced and irreducible, so the generic fiber of \(Z \to D\) is reduced and irreducible.
By tameness of \(M\), \(Z\) and \(D\) are compact.

Let \(f \colon Z \to M\) be the composition \(Z \subset D \times \Proj{1} \times M \to M\).
Let \(Z_Y \defeq f^{-1}Y \subset Z\) be the points of \(Z\) at which the marked point lies in \(Y\).
Since \(M\) is convex, \(Z\) is a complex manifold near \(\Gamma\), and \(f\) is a smooth morphism near \(\Gamma\) \cite[p. 115, Corollary 3.5.4]{Kollar:1996}. 
Hence the restriction
\[
\left.f\right|_{Z_Y} \colon Z_Y \to Y
\]
is dominant. 
Therefore, for every general \(m \in Y\), there is a deformation of \(g\) passing through \(m\) \cite[p. 113 Cor. 2.12]{SCV:VII}, and some curve \(\Gamma' \subset \Proj{1} \times M\) arbitrarily close to \(\Gamma\) has image in \(M\) striking a smooth point of \(Y\).
A Zariski open subset of \(Z\) consists of graphs of morphisms \(h \colon \Proj{1} \to M\) with marked point \(z \in \Proj{1}\) \cite[p. 87]{Douady:1966}.
The point \(\Gamma\) is such a graph.
Therefore every point near \(\Gamma\) is also such a graph.
So we can suppose that \(\Gamma'\) is also a graph, i.e. that \(m \in Y\) is a smooth point and some rational curve \(h \colon \Proj{1} \to M\) passes through \(m\) without lying in \(Y\).
The subset of \(Z_Y\) consisting of pointed graphs \((h,z)\) of a morphism \(h \colon \Proj{1} \to M\) with a point \(z \in \Proj{1}\) and \(h(z) \in Y\) is Zariski open and nonempty, so dense in \(Z_Y\), and lies entirely in the smooth points of \(Z\).

By unobstructed deformation, the tangent space of \(Z_Y\) at any such \((h,z)\) consists of the vectors \((\dot{h},\dot{z})\) composed of global sections \(\dot{h} \in \cohomology{0}{\Proj{1},h^*TM}\) and vectors \(\dot{z} \in T_z \Proj{1}\) so that \(\dot{h}(z)+h'(z)\dot{z} \in T_{h(z)} Y\).
By freedom, and the classification of vector bundles on the projective line \cite[p. 122, Th\'eor\`eme 1.1]{Grothendieck:1957}, at every point \(w \in \Proj{1}\), the values \(\dot{h}(w)\) of these sections \(\dot{h}\) have the same dimension as the values of \(\dot{h}(z)\).
By the implicit function theorem, we can move the point \(h(w)\) by small deformations of \(h\) to sweep out a smooth complex manifold of at least as large a dimension as that of \(Y\).
By Remmert's proper mapping theorem \cite[p. 5, Thm. 1.1]{Bell/Narasimhan:1990} the compact irreducible component of \(Z_Y\) containing the point \((h,z)\) is mapped to a compact irreducible subvariety \(Y' \subset M\) containing this complex manifold of values \(h(w)\).
This \(Y'\) contains a dense open subset of \(Y\), and is closed, so contains \(Y\), but is not equal to \(Y\).
But \(Y\) is irreducible, so \(Y'\) has larger dimension than \(Y\).

Every element \(\Gamma \in D\) lies in a flat family of subvarieties whose generic element is a rational curve in \(\Proj{1} \times M\).
So every element of \(D\) has the same image in \(M\) as some rational tree in \(\Proj{1} \times M\) by Lemma~\vref{lemma:tree.image}.
This rational tree is a connected finite union of rational curves in \(\Proj{1} \times M\), so if one point of the tree strikes \(\Proj{1} \times \RE{m}\) then the image of the tree lies in \(\Proj{1} \times \RE{m}\).
Therefore the irreducible subvariety \(Y'\) lies in \(\RE{m}\).
But \(Y\) is maximal among subvarieties in \(\RE{m}\) containing \(m\), a contradiction.

For any smooth point \(m \in Y\), every rational curve through \(m\) lies in \(Y\).
Take a point \(m \in Y\), perhaps not smooth, and a rational curve \(g \colon \Proj{1} \to M\) so that \(g(0)=m\).
Every deformation of \(g\) touching a smooth point of \(Y\) lies entirely in \(Y\).
But there are deformations arbitrarily close to \(g\) passing through smooth points of \(Y\), so \(g\) lies entirely in \(Y\).

By the same process, after a finite number of steps, the dimension of the set swept out by the universal flat family of chains, generically of length \(\ell\), containing a point \(m_0 \in M\) stops increasing with \(\ell\), and is a compact subvariety of \(M\). 
Therefore, the generic point of \(\RE{m_0}\) lies along some rational chain of length at most \(\ell\).
Chains of that length passing through \(m_0\) live in a proper flat family, the image of which is a compact subvariety of \(\RE{m_0}\), again by the Remmert proper mapping theorem, so is all of \(\RE{m_0}\) because \(\RE{m_0}\) is irreducible.
\end{proof}

For a pair of general points of \(\RE{m_0}\), \cite[p. 215, Theorem IV.4.13]{Kollar:1996} gives an explicit  bound on the length of rational chains connecting them.

We are not aware of an example of a convex compact connected complex manifold \(M\) and point \(m_0 \in M\) where \(\RE{m_0}\) is not smooth or not simply connected, but we have no proof that \(\RE{m_0}\) is smooth or simply connected.
For generic \(m \in M\), the rational envelope \(\RE{m}\) is a fiber of the MRC fibration and thus smooth and rationally connected \cite{Kollar:18/March/2014}.
Campana's theorem \cite[p. 545, Theorem 3.5]{Campana:1991} thereby proves that \(\RE{m}\) is simply connected for generic \(m\).
See the discussion of Campana's theorem in section~\ref{section:structure.group} below.
When we apply Proposition~\ref{prop-kollar}, we have to prove that the particular rational envelopes arising in our applications are smooth.

\section{Families of rational trees in Cartan geometries}

Take a complex homogeneous space \((X,G)\) and a holomorphic \((X,G)\)-geometry \(E \to M\).
Take a family of rational trees \(\pr{Z \to Y, s, f_0}\) through a point \(m_0 \in M\) and a point \(e_0 \in E_{m_0}\).
The \emph{developing map} of the family with frame \(e_0\) is the developing map \(f_1 \colon Z \to M_1\) as defined in subsection~\ref{subsection:Development}, for the constant section \(y \mapsto e_0\).

\begin{lemma}\label{lemma:re.flat}
Suppose that \(M\) is a complex manifold bearing a holomorphic Cartan geometry.
The curvature of the Cartan geometry vanishes on the tangent space of any smooth point of the rational envelope of any point of \(M\).
\end{lemma}

\begin{proof}
The rational envelope of any point of \(M\) is rationally connected by definition.
Any rational curve lies inside a single leaf of the foliation of Theorem~\ref{theorem:local}. 
Therefore each rational envelope lies inside a single leaf of that foliation. 
But the curvature vanishes on each leaf.
\end{proof}

\begin{proposition}\label{proposition:DeformationIdentification}
Suppose that \((X,G)\) is a complex homogeneous space and that \(M\) is a complex manifold bearing a holomorphic \((X,G)\)-geometry.
Take a rational tree \(T\) through \(m_0 \in M\) and develop it to a rational tree \(T'\) through \(x_0 \in X\).
Every sufficiently small deformation of \(T'\) is also the development of a unique deformation of \(T\).
If \(M\) is tame then every deformation of \(T'\) is the development of a unique deformation of \(T\).
 \end{proposition}

\begin{proof}
Since \(M\) is convex and compact, any rational tree in \(M\) through \(m_0\) is the fiber of a flat family of rational curves through \(m_0\) and the same is true for \(X\) for the same reason: lemma~\ref{lemma:tree.to.curve}.
It suffices to prove the local result for rational curves and take limits in a flat family, since the limits are uniform with derivatives at smooth points, so development extends to the limits. 
We can take the flat family to be the universal flat family over an irreducible component of the Douady space.
By Horikawa's deformation theorem \cite[p. 649]{Horikawa:1974} and convexity of \(M\), the deformations of any rational curve passing through a chosen point \(m_0 \in M\) are unobstructed and form a smooth moduli space near that curve.
All rational trees in \(M\) through \(m_0\) develop to rational trees in \(X\), and distinct trees develop to distinct trees.
As we vary trees holomorphically, their developments vary holomorphically.

We therefore have an injective morphism between the space of deformations of any rational curve
and the space of deformations of its development.
The deformations of a rational curve passing through a chosen point \(m_0 \in M\) form a complex space of dimension given by the dimension of sections of the ambient tangent sheaf vanishing at \(m_0\). 
By Lemma \ref{remark:TMZeroEqualsTMOne}, the ambient tangent sheaf of a rational curve is identified 
with the ambient tangent sheaf of any of its developments.
Therefore the deformation space of a rational tree and of its development are complex  spaces of equal dimension, with tangent spaces identified by development.
The developing map, being injective and having injective differential, gives a local isomorphism between these spaces near any smooth point.
By the same argument, the developing map is an injective local isomorphism on every stratum of the Douady component.
The developing map gives a global isomorphism on each compact component by properness, and therefore on the union of all compact components.
Every rational curve is a smooth point in a unique component.
If \(M\) is tame, this component is compact.
Therefore the developing map extends to all rational curves, if \(M\) is tame, and then extends to the limits of the rational curves, i.e. to all rational trees.
\end{proof}

Pick a family of rational trees \(f \colon Z\,\rightarrow \, M\) through a point \(m\,\in\, M\). 
If the image of \(f\) is dense in the analytic Zariski topology in \(\RE{m}\), \(f\) is a \emph{sufficiently large family} through \(m\).
By Proposition~\ref{prop-kollar}, there is a sufficiently large family for each point \(m\) in any tame convex compact complex manifold, and any family of rational trees through a chosen point of \(M\) whose image has large enough dimension is sufficiently large.

\begin{lemma}\label{lemma:GPrimeContainsHNought}
Suppose that \((X,G)\) is a complex homogeneous space and that \(E \to M\) is a holomorphic \((X,G)\)-geometry on a tame compact complex manifold.
Pick a point \(m_0 \in M\) and a frame \(e_0 \in E_{m_0}\).
If a point \(gx_0 \in X\) lies on the development of a rational tree through \(m_0\), then every point \(hgx_0\) lies on the development of a rational tree through \(m_0\), for any point \(h\) in the identity component of \(H\).
\end{lemma}

\begin{proof}
Pick a rational tree \(f \colon T \to X\) rooted at \(x_0 \in X\) which is the development of a rational tree in \(M\) through \(m_0\). 
Let \(H^0\) be the identity component of \(H\).
Consider the family \(F \colon T \times H^0 \to X\),
\(F(t,h)=hf(t)\).
By Proposition~\ref{proposition:DeformationIdentification}, every element of this family of rational trees  is the development of a rational tree through \(m_0\).
\end{proof}

Suppose \((X,G)\) is a complex homogeneous space and that \(E \to M\) is a holomorphic \((X,G)\)-geometry on a complex manifold.
Fix a point \(m_0 \in M\) and a frame \(e_0 \in E_{m_0}\).
Pick a rational tree \(f_0 \colon T \to M\) through \(m_0\), and let \(f_1 \colon T \to X\) be its development with frame \(e_0\).
The points of \(f_1^* G\) can be written as pairs \(\left(t,g\right)\) where \(t \in T\)
and \(g \in G\) and \(f_1(t) = g x_0\).
Let \(H'=H'\left(e_0\right)\) be the set of elements \(g \in G\) for which there is some rational tree \(f_0\) through \(m_0\) with development \(f_1\) with frame \(e_0\) and with a point \(\left(t,g\right) \in f_1^* G\). 
Call \(H'\) the \emph{drop group}; lemma~\ref{lemma:GPrimeContainsHNought} implies that \(H \subset H'\).

The bundle \(f_1^* G\) has a distinguished point \(\left(o,1\right)\) where \(o \in T\) is the marked point with \(f_0\left(o\right)=m_0\).
There is a distinguished path component of \(f_1^* G\), the path component of the distinguished point.

Let \(H'_0=H'_0\left(e_0\right)\) be the set of elements \(g \in G\) for which there is some rational tree \(f_0\) through \(m_0\) (whose development we denote \(f_1\)), and there is some point \(\left(t,g\right) \in f_1^* G\) in the distinguished path component of \(f_1^* G\). 
Call \(H'_0\)  the \emph{restricted drop group}. 
Clearly \(H'_0 \subset H'\) and the identity component of \(H\) lies in \(H'_0\).
We will eventually show that \(H'_0\) is the identity component of \(H'\).

\begin{lemma}
The drop group of a holomorphic Cartan geometry on a tame compact complex manifold is closed under multiplication, as is the restricted drop group.
\end{lemma}

\begin{proof}
Suppose that \((X,G)\) is a complex homogeneous space and that \(E \to M\) is a holomorphic \((X,G)\)-geometry on a compact complex manifold.
Fix a point \(m_0 \in M\) and a frame \(e_0 \in E_{m_0}\).
Let \(H'\) be the drop group of \(e_0\).
Take two rational trees \(f_1 \colon T_1 \to X\) and \(f_2 \colon T_2 \to X\),
both rooted at the point \(x_0 \in X\).
So each has a marked point, say \(p_1 \in T_1\) and \(p_2 \in T_2\)
so that \(f_1\left(p_1\right)=f_2\left(p_2\right)=x_0\).
We want simply to imagine sliding the tree \(T_1\) along the tree \(T_2\), so that the root of \(T_1\) traces out the points of \(T_2\).
If we can do this, then for any point in \(H'\) coming from \(T_1\), say \(g_1\), and any point in \(H'\) coming from \(T_2\), say \(g_2\), we will slide over to find that \(g_1 g_2 \in H'\).

Let \(Y = f_2^* G\). 
The points of \(Y\) have the form \(\left(q_2,g_2\right)\) with \(q_2 \in T_2\) and \(g_2 \in G\)
so that \(f_2\left(q_2\right)=g_2 x_0\).
Let \(Z_1 = Y \times T_1\) and \(Z_2 = Y \times T_2\), trivial bundles over \(Y\). Write points of \(Z_1\) as \(\left(q_2,g_2,r_1\right)\) with \(r_1 \in T_1\), and write points of \(Z_2\)
as \(\left(q_2,g_2,r_2\right)\) with \(r_2 \in T_2\).
Map \(F_1 \colon Z_1 \to X\) by 
\[
F_1\left(q_2,g_2,r_1\right)=g_2 f_1\left(r_1\right).
\]
Map \(F_2 \colon Z_2 \to X\) by 
\[
F_2\left(q_2,g_2,r_2\right)=f_2\left(r_2\right).
\]

Say that
\[
\left(q_2, g_2, r_1\right) \sim \left(q_2,g_2, r_2\right)
\]
if \(r_1=p_1\) and \(r_2=q_2\).
Let \(Z = \left(Z_1 \sqcup Z_2\right)/\sim\), i.e., graft the root of tree \(T_1\) to the point \(q_2\).
We can identify \(Z\) with the variety \(Z \subset T_1 \times T_2 \times Y\) consisting of points \(\left(r_1,r_2,q_2,g_2\right)\) so that \(r_1=p_1\) or \(r_2=q_2\).
Therefore \(Z \to Y\) is a proper family. 
The algebraic variety \(Z\) splits into two irreducible components, say \(Z=Z_1 \cup Z_2\), given by \(Z_1 = \left( r_2 = q_2 \right) \) and \(Z_2 = \left(r_1=p_1\right) \).
Note that \(Z_1 \cap Z_2 \cong Y\). 
Away from the ``grafting points'' \( \left(p_1,q_2,q_2,g_2 \right) \) the map \(Z \to Y\) is locally identified with either \(Z_1 \to Y\) or \(Z_2 \to Y\), so is flat. 
Near the ``grafting points'' we have an exact sequence
\[
0 \to \mathcal{O}_{z,Z} 
\to \mathcal{O}_{z_1,Z_1} \oplus \mathcal{O}_{z_2,Z_2}
\to \mathcal{O}_{y,Y} 
\to 0,
\]
with the middle and final entries being flat over \(\mathcal{O}_{y,Y}\).
Therefore the first entry is flat over \(\mathcal{O}_{y,Y}\) \cite[p. 254, Proposition 9.1A]{Hartshorne:1977}, so \(Z \to Y\) is flat.

The maps \(F_1\) and \(F_2\) agree along the identified points so descend to a morphism \(F \colon Z \to X\) by
\[
\left.F\right|_{Z_1} = F_1 \text{ and } \left.F\right|_{Z_2} = F_2.
\]
We take as section \(s \colon Y \to Z\) the mapping 
\[
s\left(q_2,g_2\right) = \left(p_1,p_2,q_2,g_2\right).
\]
Clearly we have constructed a family of rational trees through \(x_0\).

If \(f_1\) and \(f_2\) are developments of trees in \(M\), then each tree in the family \(F\) is a development of a tree in \(M\), by Proposition~\ref{proposition:DeformationIdentification}, 
so \(F\) is the development of a family of trees in \(M\).
\end{proof}

\begin{lemma}
The drop group is closed under taking inverses, as is the restricted drop group.
\end{lemma}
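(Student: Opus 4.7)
The plan is to characterize the drop group globally via the universal cover of the blade, and then exploit compactness. From the closure under multiplication just proven, together with \(H \subset H'\), we have \(H \cdot H' \cdot H = H'\), so \(H' = \pi^{-1}(Z)\) with \(Z = \pi(H') \subset G/H\), where \(\pi : G \to G/H\) is the projection. By Lemma~\vref{lemma:BladeCoverIsCompact}, the universal cover \(\widetilde{R}\) of the blade \(\Blade{m_0}\) is compact, and every point of \(\widetilde{R}\) lies on a tame rational tree through \(\widetilde{m}_0\); on such a tree the developing map \(\operatorname{dev} : \widetilde{R} \to G/H\) restricts to the development of the tree. This identifies \(Z\) with \(\operatorname{dev}(\widetilde{R})\), which by the Remmert proper mapping theorem is a compact analytic subvariety of \(G/H\) with only finitely many irreducible components.

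For \(g \in H'\), closure under multiplication gives \(g \cdot H' \subset H'\), which descends to \(L_g(Z) \subset Z\), where \(L_g\) is the biholomorphism of \(G/H\) given by left translation by \(g\). Hence \(L_g|_Z : Z \to Z\) is an injective holomorphic self-map of the compact analytic variety \(Z\). It permutes the finitely many maximal-dimensional irreducible components of \(Z\), so some power \(L_g^N\) preserves each; iterating the same observation on lower-dimensional strata, some further power fixes every irreducible component. On each such component, \(L_g^N\) is an injective holomorphic self-map whose image is a closed analytic subvariety of the same dimension, hence the whole component. Therefore \(L_g^N(Z) = Z\), which forces \(L_g(Z) = Z\), and so \(L_{g^{-1}}(Z) = Z\). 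In particular \(g^{-1} \cdot o_H \in Z\), giving \(g^{-1} \in H'\).

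The restricted drop group case is handled by the same argument with \(G/H\) replaced by \(G/H^0\), where \(H^0\) is the identity component of \(H\): since \(\widetilde{R}\) is simply connected, the developing map lifts from \(G/H\) through the covering \(G/H^0 \to G/H\) to a holomorphic \(\operatorname{dev}_0 : \widetilde{R} \to G/H^0\), and \(H'_0 = \pi_0^{-1}(Z_0)\) where \(Z_0 = \operatorname{dev}_0(\widetilde{R}) \subset G/H^0\) is again compact analytic. The compactness-plus-injectivity surjectivity argument then carries over verbatim. The main obstacle I expect is the two identifications \(Z = \operatorname{dev}(\widetilde{R})\) and its \(G/H^0\)-analogue: both boil down to showing that every element of the drop group (resp.\ restricted drop group) actually arises from a tame rational tree that lifts all the way up to \(\widetilde{R}\), which in turn rests on Lemma~\vref{lemma:BladeCoverIsCompact} together with the path-component bookkeeping for the pulled-back bundles \(f_1^* G\).
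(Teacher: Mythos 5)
Your argument is correct, and it takes a genuinely different route from the paper's. The paper stays inside the deformation theory: given a development $f : T \to G/H$ and a point $\left(p_1,g_1\right) \in f^*G$, it constructs the explicit family $T \times f^*G \to f^*G$ whose member over $\left(q,g\right)$ is $p \mapsto g^{-1}f(p)$ re-rooted at $q$; this connects $f$ to a tree through $o_H$ passing through $g_1^{-1}o_H$, and Proposition~\ref{proposition:DeformationIdentification} shows that tree is again a development, so $g_1^{-1}\in H'$ --- the same ``slide the root and translate back'' device used for closure under multiplication. You instead globalize: $H'=\pi^{-1}(Z)$ with $Z=\operatorname{dev}(\widetilde R)$ compact analytic (Lemma~\ref{lemma:BladeCoverIsCompact} plus Remmert), each $g\in H'$ acts on $G/H$ as a global biholomorphism with $L_g(Z)\subset Z$, and such a map must satisfy $L_g(Z)=Z$; your component-permutation argument works, or more cleanly the chain $Z\supset L_gZ\supset L_g^2Z\supset\cdots$ of compact analytic sets, all biholomorphic to $Z$, stabilizes, and applying $L_g^{-N}$ gives $L_gZ=Z$. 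What this buys: you never need the deformation family or Proposition~\ref{proposition:DeformationIdentification}, and you treat all path components of $f^*G$ at once, whereas the paper's family argument only directly reaches the path component of the root. What it costs: the two identifications you flag, which do both go through --- $Z=\operatorname{dev}(\widetilde R)$ because every tame rational tree through $m_0$ lies in the blade and lifts to $\widetilde R$, while every point of $\widetilde R$ lies on such a tree (Lemma~\ref{lemma:BladeCoverIsCompact}); and, for the restricted group, $\left(t,g\right)$ lies in the distinguished path component of $f_1^*G$ exactly when $gH^0=\hat f_1(t)$ for the canonical lift $\hat f_1 : T \to G/H^0$, by unique path lifting through the covering $G/H^0\to G/H$ and connectedness of $H^0$. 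I see no gap.
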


\begin{proof}
Suppose that \((X,G)\) is a complex homogeneous space and that \(E \to M\) is a holomorphic \((X,G)\)-geometry on a tame compact complex manifold.
Fix a point \(m_0 \in M\) and a frame \(e_0 \in E_{m_0}\).
Let \(H'\) be the drop group of \(e_0\). 
Take a rational tree \(f \colon T \to X\), say with root at \(p_0 \in T\).
Let \(Y = f^* G\), and let \(Z = T \times f^* G\),
with the obvious map \(Z \to Y\). Map
\(h \colon Z \to X\) by \(h\left(p,q,g\right)=g^{-1} f(p)\).
Then \(h\left(p,p_0,1\right)=f(p)\), so
\(h\) deforms \(f\). 
If some point \(g_1 x_0\) lies in the image of \(f\), say \(g_1 x_0 = f\left(p_1\right)\),
then \(p \mapsto h\left(p,p_1,g_1\right)\) is a tree rooted at \(p_1\) with \(h\left(p_1,p_1,g_1\right)=x_0\) and \(h\left(p_0,p_1,g_1\right)=g_1^{-1} x_0\). 
If \(g_1\) lies in the identity component of \(G\), then this family deforms \(f\) into a curve through \(g_1^{-1} x_0\), so \(g_1^{-1} \in H'\).

If \(f\) is the development of a tree in \(M\), then each tree in the family is a development of a tree in \(M\), by Proposition~\ref{proposition:DeformationIdentification}.
Therefore the restricted drop group \(H_0'\) is closed under taking inverses.
Therefore the group generated by \(H \cup H_0'\) lies in \(H'\).

The fiber of \(f^*G\) over \(p_0\) is \(H\).
Every element of any fiber of \(f^*G\) is therefore path connected to \(H\) inside \(G\).
So any such element is taken by left \(H\)-multiplication into the identity component of \(G\).
Therefore every element of \(H'\) is a product of an element of \(H\) and an element of the identity component of \(G\).
Consequently, \(H'\) is closed under taking inverses.
\end{proof}

Summing up: suppose that \((X,G)\) is a complex homogeneous space \(X=G/H\).
Let \(H^0 \subset H\) be the identity component. 
Suppose that \(M\) is a complex manifold with a holomorphic \((X,G)\)-geometry.
Pick a point \(m_0 \in M\) and a frame \(e_0 \in E_{m_0}\).
Let \(H_0'\) be the restricted drop group of \(e_0\) and let \(H'\) be the drop group of \(e_0\).
Then \(H^0 \subset H'_0 \subset H' \subset G\) are subgroups, and \(H^0 \subset H \subset H' \subset G\) are subgroups.

\section{The structure group and the rational envelope}\label{section:structure.group}

Let us look at Campana's theorem \cite[p. 545, theorem 3.5]{Campana:1991} and translate the notation.
Campana's definition of rational connectivity is more restrictive than ours.
A variety on which any two points lie on a rational chain might not be rationally connected in Campana's sense; a variety is \emph{rationally connected in the sense of Campana} if there is a proper family of rational chains covering the entire variety \(Z\), all passing through the same point.
To employ Campana's notation, take a connected normal irreducible analytic space \(Z\), and let \(A\defeq \left\{z_0\right\}\) be a point \(z_0 \in Z\), and let \(S\) be a subvariety of the Doaudy space of deformations of a rational chain through \(z_0\), so that these curves reach every point of \(Z\).
But Campana refers in his definition of rational connectivity in \cite[Definition 3.1 p. 544]{Campana:1991} back to \cite[Notation 2.0 p. 543]{Campana:1991}, where he requires \(S\) to be compact. 
In \cite[Remark 3.2 (4) p. 545]{Campana:1991}, Campana points out that these hypotheses include Fujiki manifolds.
In \cite[Theorem 3.5, p. 545]{Campana:1991}, he then says that under these hypotheses, if \(Z\) is smooth, then \(Z\) is simply connected.
Smoothness of the compact rationally connected variety is essential to Campana's result \cite[Theorem 3.5, p. 545]{Campana:1991} \cite{Kollar:18/March/2014,Kollar:20/March/2014}.
To sum up:
\begin{lemma}[Campana \cite{Campana:1991} p. 545, theorem 3.5]\label{lemma:Campana}
If \(M\) is a compact complex manifold, and some compact subvariety of the Douady space of rational chains through a point of \(M\) contains curves which cover all of \(M\), then \(M\) is simply connected.
\end{lemma}

It is convenient to restate lemma~\ref{lemma:Campana} in the context of convexity.
\begin{lemma}\label{lemma:Campana.prime}
If \(M\) is a tame convex compact complex manifold connected by rational curves then \(M\) is simply connected.
\end{lemma}
\begin{proof}
Since \(M\) is connected by rational curves, \(\RE{m_0}=M\) for every point \(m_0 \in M\).
By Proposition \ref{prop-kollar}, a sufficently large family of rational chains through \(m_0\) covers \(M\); apply lemma~\ref{lemma:Campana}.
\end{proof}

We need a slight variation on the Borel--Remmert theorem \cite[p. 216]{Akhiezer:1986}.

\begin{proposition}\label{proposition:RationallyConnectedHomogeneous}
If \(X\) is a tame rationally connected homogeneous manifold then \(X\) is a rational homogeneous variety \(X=G/P\).
In particular, \(X\) is simply connected.
\end{proposition}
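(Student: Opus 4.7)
The plan is to invoke the structure theorem of Tits for compact complex homogeneous manifolds. Let $G$ be the identity component of the biholomorphism group of $X$; it is a complex Lie group acting transitively on $X$, so $X = G/H$ for some closed complex subgroup $H$. Setting $N := N_G(H^0)$ (the normalizer of the identity component of $H$), Tits's theorem provides a $G$-equivariant holomorphic submersion
\[
\pi : X = G/H \longrightarrow X' := G/N,
\]
whose base $X'$ is a projective rational homogeneous variety of the form $G^{\mathrm{ss}}/P$, and whose typical fiber $F = N/H$ is a compact complex parallelizable manifold --- a quotient of a complex Lie group by a discrete cocompact subgroup, so with holomorphically trivial tangent bundle. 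It suffices to prove $\dim F = 0$.

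The principal observation is that a compact complex parallelizable manifold admits no nonconstant rational curve. Indeed, for any holomorphic $f : \Proj{1} \to F$, the pullback $f^* TF$ is a direct sum of copies of $\OO{0}$; a nonconstant $f$ would furnish a nonzero sheaf map $T\Proj{1} = \OO{2} \hookrightarrow f^* TF$, which is impossible by Remark~\vref{remark:OO}. Thus $F$ has no rational curves whatsoever.

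Once this is established, the desired conclusion $\dim F = 0$ follows from the rational connectedness of $X$ together with the Graber--Harris--Starr theorem applied to the $G$-equivariant submersion $\pi$: since all fibers of $\pi$ are biholomorphic to $F$, rational connectedness of $X$ forces a general (hence every) fiber to be rationally connected, and a rationally connected manifold with no rational curves is a point. Consequently $\pi$ is a biholomorphism and $X = X' = G^{\mathrm{ss}}/P$, as required. The main obstacle is the propagation of rational connectedness from $X$ down to the fibers of $\pi$ in our compact complex, a priori non-K\"ahler, setting; as a fallback one may route through Campana's theorem (rationally connected compact complex manifolds are bimeromorphic to K\"ahler), use that a compact homogeneous manifold bimeromorphic to K\"ahler is itself K\"ahler, then invoke the classical Borel--Remmert decomposition $X = Q \times T$ with $Q$ rational homogeneous and $T$ a complex torus, and observe that any holomorphic map from a rationally connected manifold to a torus is constant, which trivializes $T$ and yields $X = Q$.
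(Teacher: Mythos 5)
Your structural setup is sound: the Tits normalizer fibration $\pi : X \to G/N$ with flag-manifold base and compact parallelizable fiber $F=N/H$ is a legitimate starting point, and your observation that a compact parallelizable manifold contains no rational curve is correct (the derivative of any $f : \Proj{1} \to F$ is a global section of $f^*TF \otimes \OO{-2} \cong \OO{-2}^{\oplus \dim F}$, hence zero). The genuine gap is in the step that is supposed to finish the proof. Graber--Harris--Starr runs in the opposite direction: it says that if the base and the general fiber of a fibration are rationally connected then so is the total space; it does \emph{not} say that rational connectedness of the total space descends to the fibers, and that descent statement is false in general. A rational elliptic surface (blow up $\Proj{2}$ at the nine base points of a pencil of cubics) is rational, hence rationally connected, yet it fibers over $\Proj{1}$ with general fiber an elliptic curve containing no rational curves. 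So ``$X$ is rationally connected'' together with ``the fibers of $\pi$ contain no rational curves'' does not by itself force $\dim F = 0$; you need an input that genuinely uses the homogeneity or the bundle structure, and none is supplied on your main route.

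Your fallback is the right kind of input, but as written it hangs on the unproved assertion that a compact homogeneous complex manifold bimeromorphic to a K\"ahler manifold is itself K\"ahler. That is a real theorem, not a formality --- the Iwasawa manifold and Hopf surfaces show that compact homogeneous manifolds need not be K\"ahler, so some hypothesis is doing serious work --- and it needs a citation or a proof; without one you are assuming something of comparable strength to the proposition. The paper closes this same hole cohomologically rather than via the Borel--Remmert product decomposition: after Wang one may take $G$ semisimple, so that $X$ is a holomorphic torus bundle over a flag manifold $G/P$; Campana gives $\Cohom{q}{X,\mathcal{O}}=0$ for $q>0$ when $X$ is rationally connected, while $\Cohom{q}{G/P,\mathcal{O}}=0$ for $q>0$ and a positive-dimensional torus fiber $T$ would contribute $\Cohom{0}{G/P,R^1\pi_*\mathcal{O}} \cong \Cohom{1}{T,\mathcal{O}} \neq 0$ to $\Cohom{1}{X,\mathcal{O}}$ through the Leray spectral sequence, forcing $\dim T = 0$. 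Replacing your Graber--Harris--Starr step either by that computation or by a properly referenced class-$\mathcal{C}$ version of Borel--Remmert would close the argument.
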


\begin{proof}
Every connected compact homogeneous complex manifold is a homogeneous holomorphic bundle over some flag variety with compact holomorphically parallelizable fibers \cite[p. 216]{Akhiezer:1986}.  

Because \(X\) is homogeneous, \(TX\) is spanned by global holomorphic sections, so every rational curve on \(X\) is free.
By lemma~\ref{lemma:Campana.prime}, \(X\) is simply connected.
We can assume \cite[pp. 3--4]{Wang:1954} that \(X\) is acted on transitively, effectively and holomorphically by a complex semisimple Lie group; let \(G\) denote this group, so that \(F \to X \to G/P\) is a holomorphic fiber bundle for some parabolic subgroup \(P \subset G\).
By \cite[p. 545, Proposition 3.4]{Campana:1991}, \(\cohomology{r}{X,\mathcal{O}}=0\) for \(r>0\) and \(\cohomology{r}{G/P,\mathcal{O}}=0\) for \(r>0\).
The fiber has \(\cohomology{r}{F,\mathcal{O}}>0\) for \(r\) up to the dimension of \(F\), since \(F\) has holomorphically parallel tangent bundle.
Computing out the exact sequence from \(X\) being an \(F\)-bundle we see that \(F\) has dimension \(0\). 
Therefore \(X\) is a finite unramified covering space of a rational homogeneous variety. 
Every rational homogeneous variety is simply connected \cite[p. 207]{Akhiezer:1986}.
So \(X=G/P\) is a rational homogeneous variety.
\end{proof}

\begin{corollary}\label{corollary:GroupAction}
Suppose that \((X,G)\) is a complex homogeneous space \(X=G/H\) and \(E \to M\) is a holomorphic \((X,G)\)-geometry.
Fix a point \(m_0 \in M\) and a frame \(e_0 \in E_{m_0}\).
Let \(H'_0\) be the restricted drop group of \(e_0\). 
Then \(H'_0\) is a Lie subgroup of \(G\).
The image of the developing map \(\RE{m_0} \to X\) is the homogeneous space \(H'_0/\left(H'_0 \cap H\right)\).
\end{corollary}

\begin{proof}
Every path connected subgroup of a Lie group is a Lie subgroup \cite[p. 39]{Onishchik/Vinberg:1993}.
So \(H'_0 \subset G\) is a Lie subgroup. 
The subgroup \(H'_0\) acts transitively on the set of points in \(X\) which lie on the developments through \(e_0\) of rational trees through \(m_0\), by definition.
By Lemma~\ref{lemma:GPrimeContainsHNought}, the identity component of \(H\) lies in \(H'_0\).

If we take any point \(m \in \RE{m_0}\), we can find a rational tree through \(m_0\) and \(m\).
We can then develop this rational tree to a rational tree through a point \(x_0 \in X\) stabilized by \(H\).
So \(\RE{m_0}\) develops inside \(H'_0 x_0 = H'_0/\left(H'_0 \cap H\right)\).
By definition of \(H'_0\), the set \(H' x_0\) consists precisely of the points lying on developments of rational trees, so precisely of the image of the developing map \(\RE{m_0} \to X\). 
This image is the image of a closed analytic subvariety by Proposition~\ref{prop-kollar}, so is a closed analytic subvariety by Remmert's proper mapping theorem \cite[p. 5, Thm. 1.1]{Bell/Narasimhan:1990}.
The action of \(H_0'\) on this subvariety is holomorphic and transitive, so this subvariety is a complex homogeneous space of \(H_0'\), and \(H_0'\) acts on it with stabilizer \(H_0' \cap H\), which is therefore a closed subgroup of \(H_0'\).
\end{proof}

\begin{lemma}\label{lemma:CoveringMap}
Fix a point \(m_0 \in M\) and a frame \(e_0 \in E_{m_0}\). 
The developing map \(\RE{m_0} \to H'_0/\left(H'_0 \cap H\right)\) is an isomorphism of varieties.
\end{lemma}

\begin{proof}
The developing map \(\RE{m_0} \to H'_0 x_0\) is surjective by Corollary~\ref{corollary:GroupAction}.
The developing map is locally injective as a map to \(X\) because the curvature vanishes.
The developing map is a local isomorphism, being a surjective and locally injective
morphism of analytic varieties to a smooth analytic variety. 
By Proposition~\ref{prop-kollar}, the space \(\RE{m_0}\) is compact.
Therefore the developing map \(\RE{m_0} \to H'_0 x_0\) is a covering map.
In particular, every rational tree through \(m_0\) in \(\RE{m_0}\) projects to a rational tree through \(x_0\) in \(H'_0 x_0\), and conversely every rational tree in \(H'_0 x_0\) lifts uniquely to a rational tree through \(m_0\) in \(\RE{m_0}\).
Since \(\RE{m_0}\) is rationally connected, it follows that \(H'_0 x_0\) is also rationally
connected.
The Doaudy space of deformations of a rational tree in \(H'_0 x_0\) through \(x_0\) is canonically isomorphic as a complex space to the Doaudy space of deformations of the corresponding rational tree in \(\RE{m_0}\) through \(m_0\).
In particular, every component of that Douady space is proper.

By Proposition~\ref{prop-kollar} there is some proper flat family of rational chains through \(m_0\) whose rational chains cover all of \(\RE{m_0}\).
The corresponding flat family of rational chains through \(x_0\) given by development has rational chains covering all of \(H_0' x_0\).
By lemma~\ref{lemma:Campana.prime}, both \(\RE{m_0}\) and \(H_0 x_0\) are simply connected.
Therefore the covering map \(\RE{m_0} \to H'_0 x_0\) is an isomorphism.
\end{proof}

\begin{lemma}\label{lemma:RationalHomogeneous}
Fix a point \(m_0 \in M\) and a frame \(e_0 \in E_{m_0}\).
Let \(H'_0\) be the drop group of \(e_0\). 
Then \(H'_0\) is a complex Lie group, \(H'_0 \cap H\) is a parabolic subgroup of \(H'_0\) and so \(H'_0/\left(H'_0 \cap H\right)\) is a rational homogeneous variety.
\end{lemma}

\begin{proof}
By Lemma~\ref{lemma:CoveringMap}, the development 
\[
\RE{m_0} \to H'_0 x_0 = H'_0/\left(H'_0 \cap H\right)
\]
is an isomorphism with a compact, simply connected, rationally connected complex manifold, so \(H'_0 x_0\) is a compact, rationally connected, homogeneous complex manifold.
By Proposition~\ref{proposition:RationallyConnectedHomogeneous}, \(H'_0 x_0 = H'_0/\left(H'_0 \cap H\right)\) is a rational homogeneous variety. 
By definition, the group \(H'_0\) is just the identity component in the subgroup of \(G\) preserving this rational homogeneous variety, and so is complex analytic. 
By the Borel--Remmert theorem \cite[p. 216]{Akhiezer:1986}, the subgroup \(H'_0 \cap H \subset H'_0\) is parabolic.
\end{proof}

\section{The new structure group is complex}

\begin{lemma}
Fix a point \(m_0 \in M\) and a frame \(e_0 \in E_{m_0}\).
The drop group \(H'=H'\left(e_0\right)\) of the frame is just the set of elements \(g \in G\) so that \(g\) preserves \(H'_0 x_0\).
In particular, \(H' \subset G\) is a closed Lie subgroup.
\end{lemma}

\begin{proof}
If \(g\) preserves \(H'_0 x_0\) then \(g x_0\) lies on a rational tree through \(x_0\), by definition of \(H'_0\), so \(g \in H'\).

If \(g \in H'\), then \(g x_0\) lies on a rational tree \(f_1 \colon T \to X\) through \(x_0\) which is the development of a rational tree from \(M\), i.e., \(g \in f_1^* G\).
But \(f_1(T) \subset H'_0 x_0\) by definition of \(H'_0\).
So \(g\) lies in the preimage in \(G\) of the space \(H' x_0\). 
So \(g\) lies in the subgroup of \(G\) preserving \(H' x_0\).
\end{proof}

\begin{lemma}
Fix a point \(m_0 \in M\) and a frame \(e_0 \in E_{m_0}\).
The restricted drop group \(H'_0=H'_0\left(e_0\right)\) of the frame is the identity component of the drop group \(H'=H'\left(e_0\right)\).
The drop group is a complex Lie subgroup of \(G\).
\end{lemma}

\begin{proof}
The identity component \(H'_{\text{id}}\) of the drop group preserves \(H'_0 x_0\).
Therefore \(H'_{\text{id}} \subset H'_0\) by definition of \(H'_0\). 
The restricted drop group is a path connected subgroup of the drop group by definition, so \(H'_0 \subset H'_{\text{id}}\). 
Therefore \(H'_{\text{id}} = H'_0\). 
Since \(H'_0 \subset G\) is a complex Lie subgroup, it follows that \(H' \subset G\) is a complex Lie subgroup. 
\end{proof}

\begin{lemma}
Let \(E \to M\) be the principal \(H\)-bundle of a holomorphic Cartan geometry on \(M\).
Then the drop group of a frame is constant as we vary the frame through any connected component of \(E\), as is the restricted drop group.
\end{lemma}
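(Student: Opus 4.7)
The goal is to prove that $e\mapsto H'(e)$ (and $e\mapsto H'_0(e)$) is locally constant on $E$; connectedness of a component then yields constancy. I would argue in three stages: constancy on fibers of $\pi:E\to M$, constancy over the preimage of a single blade, and local constancy across nearby blades.

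\emph{Fiber constancy.} Fix $e_0\in E$ and $h\in H$. By $H$-equivariance of the developing isomorphism $F:f_0^*E\to f_1^*G$ of Lemma~\ref{lemma:DevelopTrees} and its uniqueness, changing the frame from $e_0$ to $e_0h$ replaces each development $f_1$ of a tame rational tree $f_0$ through $m_0$ by $h^{-1}f_1$. Consequently $H'(e_0h)=h^{-1}H'(e_0)$. Since $H\subset H'(e_0)$ (evaluate the development of any tame rational tree through $m_0$ at its root) and $H'(e_0)$ is a subgroup, $h^{-1}H'(e_0)=H'(e_0)$. Moving only within a connected component of $E$ restricts $h$ to the identity component of $H$, which connects the distinguished point $(o,1)$ of $\tilde f_1^*G$ to $(o,h)$ in $f_1^*G$; the same computation then yields $H'_0(e_0h)=H'_0(e_0)$.

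\emph{Blade constancy.} For $m_1\in R(\pi(e_0))$, by the corollary that every blade develops biholomorphically onto $H'_0(e_0)\cdot o_H\subset G/H$ via a bundle isomorphism $F:E|_{R(\pi(e_0))}\to\mathrm{dev}^*G$ with $F(e_0)=1$, choose $e_1\in E_{m_1}$ with $F(e_1)=g_1$ for some $g_1\in H'_0(e_0)\subset H'(e_0)$. Developments from frame $e_1$ are left translates by $g_1^{-1}$ of developments from $e_0$, so the fiber-constancy manipulation gives $H'(e_1)=g_1^{-1}H'(e_0)=H'(e_0)$. Combined with the first stage, $H'$ is constant on $\pi^{-1}(R(\pi(e_0)))$, and likewise for $H'_0$.

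\emph{Local constancy across blades.} For $e_0\in E$ and $e$ nearby, Horikawa's deformation theorem applied to Theorem~\ref{thm:FreeRats} deforms each tame rational tree through $m_0=\pi(e_0)$ holomorphically to tame rational trees through points in a neighborhood of $m_0$, and their developments depend holomorphically on the frame by smooth dependence of solutions of ODEs. Thus the pointed analytic subvariety $H'(e)\cdot o_H\subset G/H$ varies holomorphically with $e$. Since this subvariety is a smooth compact rational homogeneous submanifold of $G/H$ by the preceding corollary, and pointed subvarieties of the form $(H'\cdot o_H,o_H)$ with $H\subset H'\subset G$ form a discrete stratum of a suitable Douady-type moduli space (the combinatorial type of a rational homogeneous subvariety through a fixed basepoint admits only finitely many realizations up to local rigidity), a holomorphic map from a connected neighborhood in $E$ into that stratum must be locally constant. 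By connectedness, $H'$ is constant on each component of $E$, and $H'_0$ follows since it is the identity component of $H'$. The main obstacle is this last rigidity step: making precise the claim that the stabilizer of a holomorphically varying pointed rational homogeneous subvariety of $G/H$ is locally constant. The rigorous version uses that closed complex Lie subgroups of $G$ occur in discrete algebraic families once their Douady class is fixed, so that a holomorphic pointed family forces the stabilizer to be rigid.
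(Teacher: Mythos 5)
Your first two stages are sound and essentially reproduce arguments that appear elsewhere in the paper: the computation $H'(e_0h)=h^{-1}H'(e_0)=H'(e_0)$ is exactly how the paper proves independence of the frame within a fiber, and the left-translation argument within a blade is how the paper later builds the $H'$-action on $E$. The problem is your third stage, which you yourself flag as the main obstacle. You reduce local constancy across blades to the claim that a holomorphically varying pointed subvariety of the form $H'(e)\cdot o_H$ must be locally constant because such subvarieties form a discrete stratum of a Douady-type space, or because closed complex Lie subgroups of $G$ occur in discrete families once their Douady class is fixed. That last assertion is false in general (the lines through the origin in $G=\C{2}$ already form a $\Proj{1}$ of closed complex subgroups with the same Douady class), and even restricted to subgroups $H'$ with $H\subset H'$ and $H'/(H'\cap H)$ rational homogeneous, no discreteness is established. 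There is also a prior issue you do not address: to say that $H'(e)\cdot o_H$ ``varies holomorphically'' you would need to know that every tame rational tree through $\pi(e)$ arises by deforming a tree through $m_0$ along your path; otherwise you have only exhibited a holomorphically varying subfamily, and $H'(e)$ could a priori jump upward as $e$ moves.

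The paper closes exactly this gap without any rigidity statement, using Proposition~\ref{proposition:DeformationIdentification}. Deform a tame rational tree through $m_0(0)$ along the path to trees through $m_0(t)$ and develop via the moving frame $e_0(t)$; the result is a family of trees through $o_H$, one member of which is a development via $e_0(0)$, so by that proposition the entire developed family consists of developments via the fixed frame $e_0(0)$ and hence lies in $H'_0\left(e_0(0)\right)o_H$. Since freeness lets you deform any tree through $m_0(t)$ back along the path to $m_0(0)$, every tree through $m_0(t)$ is reached this way, giving $H'_0\left(e_0(t)\right)\subset H'_0\left(e_0(0)\right)$; interchanging the roles of $t$ and $0$ gives the reverse inclusion, so equality follows from symmetry rather than from rigidity. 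Constancy of $H'$ then follows because $H'$ is the stabilizer of $H'_0o_H$. I recommend replacing your stage three with this argument; your stages one and two then become redundant, since this handles motion along an arbitrary path in a component of $E$ in one stroke.
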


\begin{proof}
Suppose that the model is \((X,G)\) and let \(H \subset G\) be the stabilizer of a point \(x_0 \in X\).
For each point \(m_0 \in M\) and frame \(e_0 \in E_{m_0}\), let \(H'_0\left(e_0\right)\) be the restricted drop group of \(e_0\).
By freedom of rational trees, as we vary \(e_0=e_0(t)\), and correspondingly vary \(m_0=m_0(t)\), we can deform any rational tree \(f \colon T \to M\) into a family \(f^t \colon T \to M\) so that a marked point on it passes through \(m_0(t)\), and develop \(f^t\) via the frame \(e_0(t)\) to  a family of rational trees through \(x_0\).
These trees through \(x_0\) form a family in \(X\). 
One member of that  family is a development via \(e_0(0)\), and so by Proposition~\ref{proposition:DeformationIdentification} the entire family is a development of a family \(g^t \colon T \to M\) via \(e_0(0)\).
Therefore the entire family lies inside the homogeneous space \(H'_0\left(e_0(0)\right) x_0\).
Therefore \(H'_0\left(e_0(t)\right) \subset H'_0\left(e_0(0)\right)\) for any \(t\) in our family.
Since the choice of \(t\) and \(0\) are arbitrary, \(H'_0\left(e_0(t)\right)\) is constant.
\end{proof}

\begin{lemma}
The drop group of a holomorphic Cartan geometry on a connected complex manifold is independent of the choice  of frame.
\end{lemma}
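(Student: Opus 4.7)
The plan is to bridge different connected components of $E$ by combining the preceding lemma (which handles variation within one component) with the right $H$-action on fibers and the connectedness of $M$. The argument has two steps.

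First, I would show that $H'(e_0 \cdot h) = H'(e_0)$ for every $h \in H$. The development $f_1$ of a rational tree via frame $e_0$ is characterized by the unique $H$-equivariant isomorphism $F : f_0^* E \to f_1^* G$ with $F(e_0) = 1$. Using right $H$-equivariance of $F$, the corresponding isomorphism $F''$ with $F''(e_0 \cdot h) = 1$ satisfies $F'' = L_{h^{-1}} \circ F$, so the corresponding development is the left translate $f_1'' = h^{-1} \cdot f_1$. Therefore $g o_H$ lies on some development from $e_0 \cdot h$ precisely when $hg o_H$ lies on some development from $e_0$, i.e., $H'(e_0 \cdot h) = h^{-1} H'(e_0)$ as subsets of $G$. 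Since $H \subset H'(e_0)$ and the drop group has already been shown to be a subgroup, the left coset $h^{-1} H'(e_0)$ coincides with $H'(e_0)$.

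Second, given two arbitrary frames $e_0, e_0' \in E$, I would use that $M$ is connected to choose a continuous path in $M$ from $\pi(e_0)$ to $\pi(e_0')$, lift it continuously to $E$ starting at $e_0$, and obtain a frame $\tilde{e}_0$ in the fiber over $\pi(e_0')$ lying in the same connected component of $E$ as $e_0$. The preceding lemma then gives $H'(\tilde{e}_0) = H'(e_0)$. Since $H$ acts simply transitively on each fiber, there is a unique $h \in H$ with $e_0' = \tilde{e}_0 \cdot h$, and the first step yields $H'(e_0') = H'(\tilde{e}_0) = H'(e_0)$.

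The main obstacle is the first step: one must track the convention relating the right $H$-action on frames to the left $G$-action on developments, and then exploit crucially that $H \subset H'(e_0)$ to convert the coset identity $H'(e_0 \cdot h) = h^{-1} H'(e_0)$ into a genuine equality. Once fiberwise invariance is secured, connectedness of $M$ plus the preceding lemma immediately extends constancy across all of $E$, even when $H$ (and hence $E$) is disconnected.
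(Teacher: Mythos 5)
Your proposal is correct and follows essentially the same route as the paper: first the fiberwise identity $H'(e_0h)=h^{-1}H'(e_0)$ via left translation of developments, converted to equality using $H\subset H'(e_0)$ and the group property of $H'$, then constancy along connected components of $E$ from the preceding lemma, glued together by connectedness of $M$. Your second step merely makes explicit the path-lifting argument that the paper leaves implicit in the sentence ``Since $M$ is connected, the drop group is constant.''
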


\begin{proof}
Take a complex homogeneous space \((X,G)\) and let \(H \subset G\) be the stabilizer of a point \(x_0 \in X\).
Take a holomorphc \((X,G)\)-geometry \(E \to M\) on a connected complex manifold \(M\).
Pick a rational tree \(f_0 \colon T \to M\) and a frame \(e_0\) and an element \(h \in H\).
Suppose that \(f_1 \colon T \to X\) is  the development of \(f_0\) via the frame \(e_0\).
By left invariance on \(G\) the development of \(f_0\) via the frame \(e_0 h\) is 
\(h^{-1} \, f_1\).
Therefore \[H'\left(e_0h\right)=h^{-1} H'\left(e_0\right)\, .\]
But \(H \subset H'\left(e_0\right)\), so \(H'\left(e_0h\right) = H'\left(e_0\right)\).
Therefore on any fiber of \(E\), the drop group is constant. 
But it is also constant as we move through a connected component of \(E\). 
Since \(M\) is connected, the drop group is constant.
\end{proof}

\section{Quotienting by the new structure group}

\begin{proposition}\label{prpr}
Take a complex homogeneous space \((X,G)\), pick a point \(x_0 \in X\) and let \(H \subset G\) be the stabilizer of \(x_0\).
Take a connected compact tame complex manifold \(M\).
Let \(E \to M\) be the principal \(H\)-bundle of a holomorphic \((X,G)\)-geometry on \(M\), and let \(\omega\) be the Cartan connection form on \(E\). 
Suppose that \(M\) contains a rational curve.
Let \(H'\) be the drop group of the Cartan geometry.
There is a unique holomorphic right \(H'\)-action on \(E\)  so that
\begin{enumerate}
\item
this action extends the right \(H\)-action, and
\item
for any choice of point \(m_0 \in M\) and \(e_0 \in E\), the development of the rational envelope
\(\RE{m_0} \to M\) via the frame \(e_0\) to a map \(\RE{m_0} \to X\) has  associated bundle isomorphism  \(F \colon \left.E\right|_{\RE{m_0}} \to \left.G\right|_{H'x_0}\) which is \(H'\)-equivariant.
\end{enumerate}
\end{proposition}

\begin{proof}
By definition of \(H'\), \(\left.G\right|_{H'x_0}=H'\),
so if there is to be such an \(H'\)-action, it is unique.
Each choice of point \(m_0\) and frame \(e_0\) gives an isomorphism \(F \colon \left.E\right|_{\RE{m_0}} \to \left.G\right|_{H'x_0}\) by development, so defines a holomorphic right \(H'\)-action on \(\left.E\right|_{\RE{m_0}}\) extending the \(H\)-action.
The \(H'\)-action on \(\left.E\right|_{\RE{m_0}}\) varies holomorphically with choice of frame \(e_0\) and of point \(m_0\) by continuity of development of a family of integrals.
We only have to prove that this action depends only on the choice of rational envelope, not the choice of point \(m_0\) and frame \(e_0\).

Suppose we pick two points \(m_0\) and \(m_1\) in the same rational envelope, and two frames \(e_0 \in E_{m_0}\) and \(e_1 \in E_{m_1}\). 
Since \(m_0\) and \(m_1\) lie in the same rational envelope, under development via \(e_0\), \(e_0\) is taken to \(1 \in H'\), while \(e_1\) is taken to some point of \(H'\), say \(g\). By left invariance, development by \(e_1\) is just development by \(e_0\) followed by left translation by \(g^{-1}\). 
Since right action on \(H'\) commutes with left action, the right action of \(H'\) on \(E\)
computed via either development is the same.
\end{proof}

It is not immediately clear whether or not the action of \(H'\) turns \(E \to E/H'\) into a principal \(H'\)-bundle; nonetheless we can see how the Cartan connection behaves.

\begin{lemma}
Under the hypotheses of Proposition \ref{prpr}, conditions (1), (2) and (3) of the definition of a Cartan connection (Definition~\ref{def:CartanConnection}) are satisfied (with \(H, \LieH\) replaced by \(H', \LieH'\) and \(V\) replaced by \(\LieG\)).
\end{lemma}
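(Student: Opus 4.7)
The plan is to verify each of the three conditions of Definition~\ref{def:CartanConnection} for the pair $(H', \LieH')$ in place of $(H, \LieH)$. Condition (2)---that $\omega_e : T_eE \to \LieG$ is a linear isomorphism at each $e$---is inherited unchanged from the original Cartan geometry and requires nothing new. For (1) and (3), the strategy is to transport the corresponding standard identities for the Maurer-Cartan form $g^{-1}\,dg$ on $G$ to $E$ via the developing isomorphism of the preceding proposition.

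Fix a point $m_0 \in M$ lying in a blade, together with any frame $e_0 \in E_{m_0}$. By the preceding proposition, development produces an $H'$-equivariant biholomorphism $F : E|_{\Blade{m_0}} \to G|_{H'o_H}$ satisfying $F^*(g^{-1}\,dg) = \omega$.

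To verify (1): left-invariance of the Maurer-Cartan form yields the model identity $r_h^*(g^{-1}\,dg) = \Ad_h^{-1}(g^{-1}\,dg)$ on $G$ for every $h \in G$, in particular for $h \in H'$. Since $F$ intertwines the right $H'$-actions and pulls back $g^{-1}\,dg$ to $\omega$, pulling this identity back along $F$ gives $r_h^*\omega = \Ad_h^{-1}\omega$ on $E|_{\Blade{m_0}}$. To verify (3): on $G$, the vector field defined by $\vec{A}\hook(g^{-1}\,dg) = A$ is the left-invariant field $g \mapsto g \cdot A$, whose flow is right multiplication by $\exp(tA)$; for $A \in \LieH'$ these are precisely the infinitesimal generators of the right $H'$-action on $H' \subset G$. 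Transferring via $F$, the analogous statement holds for $\vec{A}$ on $E|_{\Blade{m_0}}$.

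Letting $m_0$ range over $M$, the identities (1) and (3) hold on the union of the subsets $E|_{\Blade{m_0}}$, and they extend to all of $E$ by holomorphicity together with the fact, asserted in the preceding proposition, that the $H'$-action is a single globally defined holomorphic action on $E$. There is essentially no obstacle in this lemma: the genuine work was done in constructing the equivariant isomorphism $F$, after which the three conditions reduce to three purely algebraic properties of the Maurer-Cartan form on $G$, transported pointwise through $F$.
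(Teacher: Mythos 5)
Your treatment of conditions (2) and (3) matches the paper's: (2) is inherited verbatim, and (3) follows because the $H'$-action on each $H'$-orbit is identified, via the developing isomorphism $F$, with right translation on $H' \subset G$, whose infinitesimal generators are the left-invariant fields $\vec{A}$, $A \in \LieH'$. The problem is condition (1). The developing isomorphism $F : \left.E\right|_{\Blade{m_0}} \to \left.G\right|_{H'o_H}$ only identifies the \emph{pullbacks} of the Cartan connection and the Maurer--Cartan form, i.e., it controls $\omega$ only on vectors tangent to the submanifold $\left.E\right|_{\Blade{m_0}} \subset E$, which has positive codimension whenever the blade is a proper subvariety of $M$. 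Transporting $r_h^*\left(g^{-1}\,dg\right) = \Ad_h^{-1}\left(g^{-1}\,dg\right)$ through $F$ therefore only yields $r_h^*\omega = \Ad_h^{-1}\omega$ on blade-tangent directions. But condition (1) is an identity of $\LieG$-valued 1-forms on \emph{all} of $T_eE$, including directions transverse to the blade, and these are invisible to $F$. Your closing appeal to holomorphicity does not repair this: although the sets $\left.E\right|_{\Blade{m_0}}$ cover $E$ as $m_0$ varies, at no point of $E$ have you established the identity on a full tangent space (nor on any open set), so there is nothing to analytically continue. A 1-form can vanish on the tangent distribution of a foliation without vanishing.

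The paper closes exactly this gap by an infinitesimal argument rather than a transport argument: since $H'$ is generated by $H$ (where (1) is already known) and the connected group $H'_0$, it suffices to show $\LieDer_{\vec{A}}\omega = -\left[A,\omega\right]$ for $A \in \LieH'_0$, and by the Cartan formula together with $\nabla\omega = d\omega + \tfrac{1}{2}\left[\omega,\omega\right]$ this reduces to $\vec{A} \hook \nabla\omega = 0$. The crucial point is that Lemma~\ref{lemma:TgtKillsCurvature} (via Corollary~\ref{corollary:BladeIntegralVariety}) gives the vanishing of $\vec{A}\hook\nabla\omega$ as a full 1-form on $E$ --- contracting the curvature with a single blade-tangent (positive-degree) vector annihilates it even when the remaining slot is fed a transverse vector. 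That is the extra input your argument is missing; without some version of it, condition (1) in the transverse directions is simply not addressed.
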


\begin{proof}
Condition (2): the Cartan connection is a linear isomorphism on each tangent space: the Cartan connection is unchanged from the original Cartan geometry
\(E \to M\). 
Condition (3): the \(H'\)-action is generated by the vector fields \(\vec{A}\) for \(A \in \LieH'\). 
This is true in the model, i.e., on \(G\), and therefore on \(E\) because the \(H'\)-action
is identified with the \(H'\)-action on \(H' \subset G\). 
We only have to check condition (1): that the Cartan connection transforms in the adjoint representation.

The group \(H'\) is generated by \(H'_0\) and \(H\).
Under \(H\)-action, the Cartan connection transforms in the adjoint representation. So it suffices to check if under \(H'_0\)-action the Cartan connection transforms in the adjoint representation. 
By definition \(H'_0\) is connected. 
Therefore it suffices to check whether under the Lie algebra action of the Lie algebra \(\LieH'_0\) of \(H'_0\), the Cartan connection transforms under the adjoint representation. In other words, for every vector \(A \in \LieH'_0\), we need to check that
\[
\LieDer_{\vec{A}} \omega = - \left[A,\omega\right].
\]
By the Cartan equation
\[
\LieDer_{\vec{A}} \omega = \vec{A} \hook d \omega + d
\left (\vec{A} \hook \omega\right),
\]
and using the equation
\[
\nabla \omega = d \omega + \frac{1}{2} \left[\omega,\omega\right],
\]
we see that it suffices to check whether
\[
\vec{A} \hook \left( \nabla \omega - \frac{1}{2}\left[\omega,\omega\right]\right)
=
- \left[A,\omega\right]
\]
i.e., whether
\[
\vec{A} \hook \nabla \omega =0,
\]
which we know already from Lemma~\ref{lemma:re.flat}.
\end{proof}

\begin{lemma}\label{lemma:ComplexStructure}
If \(E \to M\) is a smooth real Cartan geometry then the curvature of \(E \to M\) is a section of \(\Ad(E) \times \Lm{2}{T^*M}\). 
If the model is a complex homogeneous space, then \(E \to M\) is a holomorphic Cartan geometry, for a unique complex structure on \(M\), if and only if the curvature is a complex linear 2-form valued in the complex vector bundle \(\Ad(E)\).
\end{lemma}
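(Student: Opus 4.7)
The plan is to use $\omega$ to transport the complex structure of $\LieG$ back to $TE$, descend the resulting almost complex structure to $M$, and then reduce integrability to the hypothesis via the Newlander--Nirenberg theorem.

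First I would dispatch the forward direction: if $E\to M$ is holomorphic, then $\omega$ is a holomorphic $\LieG$-valued $1$-form, so $d\omega+\tfrac12[\omega,\omega]$ is a holomorphic $2$-form on $E$, which makes $K$ a section of $\Lm{2}{T^*M}\otimes(E\times^{H}\LieG)$ of type $(2,0)$; that is exactly the complex-linearity statement. For the converse, define an almost complex structure $J_{E}$ on $TE$ by the rule that $\omega_{e}:T_{e}E\to\LieG$ be complex linear, i.e.\ $J_{E}=\omega^{-1}\circ(i\,\cdot\,)\circ\omega$. This is well defined because $\omega$ is a pointwise real-linear isomorphism. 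Since $H\subset G$ is a complex Lie subgroup, $\Ad_{h}^{-1}$ is complex linear on $\LieG$, and the equivariance $r_{h}^{*}\omega=\Ad_{h}^{-1}\omega$ forces $r_{h}^{*}J_{E}=J_{E}$. The vertical distribution $\omega^{-1}(\LieH)$ is $J_{E}$-invariant because $\LieH\subset\LieG$ is a complex subspace, so $J_{E}$ descends to an almost complex structure $J$ on $M=E/H$. This $J$ is the unique candidate complex structure on $M$: the requirement that $\omega$ be holomorphic forces $\omega\bmod\LieH$ to be $\mathbb{C}$-linear, which pins down $J$ via the isomorphism $TM\cong E\times^{H}(\LieG/\LieH)$ of Lemma~\vref{lemma:TgtBundle}.

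The key step is integrability of $J_{E}$, which I would verify by computing the Nijenhuis tensor on the constant vector fields $\vec{A}$ characterised by $\vec{A}\hook\omega=A$ for $A\in\LieG$. The structure equation gives $\omega([\vec{A},\vec{B}])=[A,B]-K(\vec{A},\vec{B})$, and substitution into
\[
N_{J_{E}}(\vec{A},\vec{B})=[\vec{A},\vec{B}]+J_{E}[\vec{iA},\vec{B}]+J_{E}[\vec{A},\vec{iB}]-[\vec{iA},\vec{iB}]
\]
makes the four Lie-bracket terms in $\LieG$ cancel, because $\LieG$ is a complex Lie algebra (so $[iA,B]=i[A,B]=[A,iB]$ and $[iA,iB]=-[A,B]$). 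What remains is
\[
\omega\bigl(N_{J_{E}}(\vec{A},\vec{B})\bigr)=-\bigl(K(\vec{A},\vec{B})+iK(\vec{iA},\vec{B})+iK(\vec{A},\vec{iB})-K(\vec{iA},\vec{iB})\bigr),
\]
which is $-4$ times the $(0,2)$-component of $K$ at that point. Hence $N_{J_{E}}=0$ if and only if $K$ has no $(0,2)$-part, i.e.\ exactly the complex-linearity hypothesis. Newlander--Nirenberg then produces a complex structure on $E$; the vector fields $\vec{A}$, $A\in\LieH$, are by construction $(1,0)$-vector fields and generate the $H$-action, so $E\to E/H$ is a holomorphic principal $H$-bundle over a complex manifold $M$. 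Finally $\omega$ is $\mathbb{C}$-linear by construction, and the vanishing of the $(0,2)$-part of $d\omega+\tfrac12[\omega,\omega]=K$ is precisely $\bar\partial\omega=0$, so $\omega$ is holomorphic.

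The hard part is the Nijenhuis computation and its identification with $K^{0,2}$: one must track signs carefully and observe that the argument uses all of $\LieG$ rather than only horizontal directions (on vertical vectors the bracket terms still cancel and $K$ vanishes tautologically). Once that step is in hand, the remaining verifications---the holomorphicity of the $H$-action, of the principal bundle structure, and of $\omega$ itself---are formal consequences of $J_{E}$ being built from $\omega$ together with $H$ being a complex subgroup of $G$.
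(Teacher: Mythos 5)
Your proposal is correct and follows essentially the same route as the paper: transport the complex structure of $\LieG$ to $TE$ via $\omega$, identify integrability of the resulting almost complex structure with the vanishing of the $(0,2)$-part of the curvature (you make the Nijenhuis computation explicit where the paper merely asserts it), identify holomorphy of $\omega$ with the vanishing of the remaining non-$(2,0)$ part, and descend to $M$ using $H$-equivariance. One small correction: in your final sentence, $\bar{\partial}\omega=0$ for the already-integrable structure corresponds to the vanishing of the $(1,1)$-part of $K$, not the $(0,2)$-part (which you have already used for integrability); since the hypothesis is that $K$ is of pure type $(2,0)$, both parts vanish and your argument still goes through.
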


\begin{proof}
Let \(\pi \colon E \to M\) be the bundle map.
At each point \(e \in E\), the Cartan connection \(\omega\) gives a real linear isomorphism \(\omega_e \colon T_e E \to \LieG\).
This isomorphism makes \(E\) into an almost complex manifold, using the complex linear structure on \(\LieG\). 
This almost complex structure is integrable just exactly when \(d \omega\) is a multiple
of \(\omega\), or equivalently when the \((2,0)\)-part of \(d \omega\) valued in \(\overline{\omega} \wedge 
\overline{\omega}\) vanishes, or equivalently when the part of the curvature valued in \(\LieG \otimes_{\C{}} \Lm{0,2}{\LieG/\LieH}\) vanishes. 
Should this occur, the 1-form \(\omega\) is then holomorphic just when \(d \omega\) has no \(\omega \wedge \overline{\omega}\) terms, that is the curvature is complex linear,
in \(\LieG \otimes_{\C{}} \Lm{2,0}{\LieG/\LieH}\).

Clearly the 1-form \(\omega+\LieH\) given by \(\omega_e + \LieH \colon T_e E \to \LieG/\LieH\) is semi-basic for \(\pi\).
Let \(\Omega_e\) be the linear map \(\Omega_e \colon T_m M \to \LieG/\LieH\) so that 
\[
\left(\pi'(e) v\right) \hook \omega_e = \Omega_e(v)
\]
for all \(v \in T_e E\), where \(m=\pi(e)\).
Clearly \(\Omega_e\) is a linear isomorphism for each \(e \in E\). Use \(\Omega_e\) to define a complex structure on \(T_m M\). 
We know that \(r_g^* \omega=\Ad(g)^{-1} \omega\) for all \(g \in H\). 
Therefore \(\Omega_e\) transforms by a complex linear isomorphism if we move \(e\) while fixing the underlying point \(m=p(e)\). 
Therefore \(M\) has a unique almost complex structure for which \(\pi \colon E \to M\) is holomorphic. 
But \(\pi\) is a submersion, so the almost complex structure is a complex structure. 
The morphism \(E \to M\) is therefore a holomorphic principal bundle.
\end{proof}

\begin{proposition}\label{proposition:CartanDescent}
Suppose that \(X=G/H\) is a complex homogeneous space and
\(M\) is a connected tame compact complex manifold with a holomorphic \(\pr{X,G}\)-geometry \(E \to M\). 
Pick a point \(m_0 \in M\) and frame \(e_0 \in E_{m_0}\).
Suppose that \(M\) contains a rational curve.
Pick a point \(m_0 \in M\) and frame \(e_0 \in E_{m_0}\).
Let \(H'\) be the drop group of the Cartan geometry and \(X'=G/H'\).
The \(\pr{X,G}\)-geometry on \(M\) drops to an \(\pr{X',G}\)-geometry on a compact complex manifold \(M'\) so that \(M \to M'\) is a holomorphic fiber bundle, with fibers biholomorphic to \(H'/H\).
\end{proposition}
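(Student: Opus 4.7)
The plan is to construct $M' := E/H'$ as a holomorphic quotient and realize $E \to M'$ as a Cartan geometry modelled on $G/H'$. The lemma preceding the proposition has already verified that $\omega$ satisfies axioms (1), (2) and (3) of Definition~\ref{def:CartanConnection} with $(H,\LieH)$ replaced by $(H',\LieH')$, so the essential remaining task is to produce $M'$ as a complex manifold such that $E \to M'$ is a holomorphic principal $H'$-bundle.

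The first step is to show that the holomorphic right $H'$-action on $E$ is free and proper. Freeness holds infinitesimally because $\omega_e$ is a linear isomorphism and $\omega(\vec{A})=A$ for $A \in \LieH'$, and globally because, under development, the orbit map $H' \to E$ through any point is identified with free right translation inside $G$ (using Corollary~\ref{corollary:GroupAction} for the $H'_0$-piece and the principality of $E \to M$ for the $H$-piece). Properness is the harder issue: the $H'$-orbits in $E$ are typically noncompact, but they project via $E \to M$ to \emph{blades} in $M$, which by Corollary~\ref{corollary:RationallyConnected} are smooth compact complex subvarieties of a single fixed dimension $\dim H' - \dim H$, forming a holomorphic foliation of $M$. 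Given sequences $e_n \to e_\infty$ in $E$ and $e_n \cdot h'_n \to e'_\infty$ in $E$, projection to $M$ and continuity of the foliation force $e_\infty$ and $e'_\infty$ to lie over the same blade; a local inverse of the orbit map $h' \mapsto e_\infty \cdot h'$, which exists because $\omega$ trivialises the orbit direction, then yields $h'_n \to h'_\infty$ with $e_\infty \cdot h'_\infty = e'_\infty$.

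Once freeness and properness are in hand, $M' := E/H'$ is a complex manifold and $E \to M'$ is a holomorphic principal $H'$-bundle by the standard quotient theorem for holomorphic Lie group actions; equivalently, Lemma~\ref{lemma:ComplexStructure} reads off the complex structure on $M'$ directly from $\omega$ and its $H'$-equivariance. The induced map $M \to M'$ is then the associated bundle $E/H \to E/H'$, a holomorphic fiber bundle with fiber $H'/H$; its fibers are precisely the blades, each biholomorphic to the rational homogeneous variety $H'/H$ by the results of the preceding sections.

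The principal obstacle is the properness argument, which is what makes the quotient $M'$ Hausdorff and the orbit map a submersion; it depends crucially on the compactness of blades and the constancy of their dimension, both of which have been secured earlier. The claim that $M'$ is K\"ahler, beyond being merely compact complex, is not forced by the hypotheses stated here alone; in the K\"ahler setting of the main theorems, it follows from the earlier remark that a suitable power of the K\"ahler form on $M$ pushes down through the compact rational homogeneous fibers of $M \to M'$.
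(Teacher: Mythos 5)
Your overall strategy is the same as the paper's: set $M'=E/H'$, use development to see that $H'$ acts freely and holomorphically on $E$, establish that the quotient is a manifold with $E\to M'$ a holomorphic principal $H'$-bundle, and read the complex structure off the Cartan connection via Lemma~\ref{lemma:ComplexStructure}. The gap sits exactly where you locate the ``principal obstacle.'' You assert that ``projection to $M$ and continuity of the foliation force $e_\infty$ and $e'_\infty$ to lie over the same blade,'' but the continuity of the blade decomposition --- equivalently, the closedness of the relation $R=\{(e,eg): e\in E,\ g\in H'\}$ in $E\times E$ --- is precisely the nontrivial content of this step, and it does not follow from compactness of the individual blades plus constancy of their dimension: a decomposition into compact leaves of constant dimension can still fail to have a Hausdorff leaf space. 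The paper closes this gap with a dedicated lemma inside the proof: given $e_n\to e$ and $e_ng_n\to e'$ with $g_n\in H'$, it produces stable tame rational trees $f_n$ through $\pi\left(e_n\right)$ carrying paths in $f_n^*E$ from $e_n$ to $e_ng_n$, arranges (by developing one fixed compact connected component of the orbifamily of stable trees through $o_H$ via the varying frames $e_n$, using Proposition~\ref{proposition:DeformationIdentification}) that all the $f_n$ lie in a single compact component of the orbispace, extracts a Gromov--Hausdorff convergent subsequence, and reads off from the limiting tree that $e'=eg$ for some $g\in H'$. Tameness and the compactness results of Lemma~\ref{lemma:CompactOrbifamily} enter here in an essential way and are absent from your sketch. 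A second, smaller point: even granting that $e_\infty$ and $e'_\infty$ lie over the same blade, the $H'$-orbit of $e_\infty$ is in general only a \emph{union of path components} of the preimage of that blade in $E$ (this matters when $H$ is disconnected), so you would still have to show the limit lands in the correct component; the tree-limit argument does this automatically because it carries the connecting path along.

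Your closing remark about the K\"ahler claim is fair: the paper's own proof never addresses it, and it can only come from pushing a power of a K\"ahler form on $M$ down through the rational homogeneous fibers, which requires $M$ K\"ahler --- a hypothesis not present in the proposition as stated.
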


\begin{proof}
Suppose that \(H \subset G\) is the stabilizer of a point \(x_0 \in X\).
By development, we identify each \(H'\)-orbit in \(E\) with an \(H'\)-orbit in \(G\),
and therefore \(H'\) acts freely on \(E\).

Consider an \(H'\)-orbit \(H'e_0 \subset E\).
By development of a rational envelope \(\RE{m_0} \subset M\), we see that the rational envelope is identified with \(H' x_0 \subset X\) and the \(H'\)-orbit \(H'e_0\) identified with \(H'\).
Since \(H'\) contains \(H^0\), \(H'\) is a union of path components in the preimage in \(G\) of \(H' x_0 \subset X\).
Therefore the orbit \(H'e_0 \) is a union of path components of the preimage in \(E\) of \(\RE{m_0} \subset M\).
Therefore \(H' e_0\) is closed in \(E\). Therefore each \(H'\)-orbit in \(E\) is a closed set and a submanifold biholomorphic to \(H'\). 
At this step of our proof, we need a lemma:

\begin{lemma}
Let
\[
 S \,=\, \Set{(e,eg)\,\vert\, e \in E \text{ and } g \in H'} \,\subset\, E \times E\, .
\]
Then \(S\) is a closed subset in \(E \times E\).
\end{lemma}

\begin{proof}
Take a convergent sequence of points \(e_n \to e \in E\) and a convergent
sequence of points \(e_n g_n \to e' \in E\) with \(g_n\) a sequence in
\(H'\). We want to prove that \(e' = eg\) for some \(g \in H'\).
Write the map \(E \to M\) as \(\pi \colon E \to M\). 
Let \(m_n = \pi\left(e_n\right)\), \(m = \pi\left(e\right)\) and
\(m' = \pi\left(e'\right)\).

By definition of \(H'\), we can find a sequence of rational trees
\(f_n \colon T_n \to M\), with \(f_n\) passing through \(m_n\), 
so that there is a path in \(f_n^* E\) from \(e_n\) to \(e_n g_n\).
By Proposition~\ref{prop-kollar}, we can choose all of these trees to live in the same proper flat family over a compact parameter space.
After perhaps slightly perturbing the various \(e_n\) and \(g_n\), we can choose these trees \(f_n\) to have a uniform bound on their number of rational curve components.
We can even demand that every one of these trees has a single component, by Lemma~\vref{lemma:tree.to.curve}.
By the compactness of the components of the Douady spaces, we can therefore replace these trees by a convergent subsequence converging to a rational tree (perhaps with some larger number of rational curve components).
The limit is a rational tree \(f \colon T \to M\) so that \(f^* E\) contains both \(e\) and \(e'\). 
The development of \(f\) via \(e\) lies in \(H' x_0\), so we see that \(e'=eg\) for some \(g \in H'\).
\end{proof}

Returning to the proof of the theorem, \(S\) is an immersed submanifold of \(E \times E\), since the map \((e,g) \mapsto (e,eg)\) is a local biholomorphism to \(S\). Moreover, this map is 1-1, since \(H'\) acts freely. 
Therefore \(S \subset E \times E\) is a closed set and a submanifold. 
Therefore \(E/H'\) admits a unique smooth structure as a smooth manifold for which \(E \to
E/H'\) is a smooth submersion \cite[p. 262]{Abraham/Marsden:1978}.

Let \(M'=E/H'\). 
Because \(E \to M'\) is a smooth submersion, we can construct a local smooth section near any point of \(M'\), say \(s \colon U \to E\), for some open set \(U \subset M'\).
 Let \(\rho \colon E \to M'\) be the quotient map by the \(H'\)-action. 
Define \(\phi(u,g)=s(u)g\), \(\phi \colon U \times H' \to \rho^{-1}U\).
This map \(\phi\) is a local diffeomorphism, because \(\phi\) is \(H'\)-equivariant and \(\rho \circ \phi(u,g)\,=\,u\) is a submersion.
Moreover, \(\phi\) is 1-1 because \(H'\) acts freely. 
Every element of \(\rho^{-1}U\) lies in the image of our section \(s\) modulo \(H'\)-action, so \(\phi\) is onto. 
Therefore \(E \to M'\) is locally trivial, so a principal \(H'\)-bundle.
The Cartan connection of \(E \to M\) is a Cartan connection for \(E \to M'\). 
By Lemma~\ref{lemma:ComplexStructure}, this Cartan geometry is holomorphic.
\end{proof}

\section{Proof of the main theorem}%
\label{section:main.proof}

\begin{lemma}
Suppose that \(M \to M'\) is a holomorphic fiber bundle with rational homogeneous fibers and that \(M\) is tame, Fujiki, K\"ahler, projective.
Then \(M'\) is tame, Fujiki, K\"ahler, projective.
\end{lemma}
\begin{proof}
A manifold \(M\) is Fujiki just when it is dominated by a K\"ahler manifold, but then \(M\) dominates \(M'\) so \(M'\) is Fujiki.
If \(M\) is K\"ahler, with K\"ahler form \(\omega\), and has fibers of complex dimension \(s\) then integrating \(\omega^{s+1}\) over the fibers of \(M \to M'\) gives a K\"ahler form \(\omega'\) on \(M'\) by Fubini's theorem.
If \(\omega\) is a  K\"ahler form in the image of an integer cohomology class, then the form \(\omega'\) is also in an integer cohomology class, by Poincar\'e duality.
Therefore if \(M\) is projective then \(M'\) is projective by Kodaira's embedding theorem.
Suppose that \(M\) is tame.
Take a rational curve \(f \colon \Proj{1} \to M'\).
The pullback \(f^* M \to \Proj{1}\) is a bundle of rational homogeneous varieties, say isomorphic to \(X=G/P\) where \(G\) is the biholomorphism group of \(X\) and \(P \subset G\) is a parabolic subgroup.
Every parabolic subgroup \(P\) contains a Cartan subgroup of \(G\): \(C \subset P\) \cite[p. 384]{Fulton/Harris:1991}.
The associated holomorphic principal \(G\)-bundle over \(M'\) reduces to a holomorphic principal \(C\)-bundle over \(M'\) \cite[p. 122, Thm. 1.1]{Grothendieck:1957}.
But \(C\) fixes a point of \(X\) so \(f^*M \to \Proj{1}\) admits a global holomorphic section.
In particular, some rational curve in \(M\) maps onto our rational curve in \(M'\), i.e. all rational curves lift.

Let \(D'\) be the component of the Douady space of curves in \(\Proj{1} \times M'\) containing the graph of \(f\) and \(D\) the component of the Douady space of curves in \(\Proj{1} \times M\) containing the graph of a lift of \(f\).
Then \(D \to D'\) gives a morphism of proper flat families, mapping onto the dense open set in \(D'\) consisting of graphs of morphisms \(\Proj{1} \to M'\).
The irreducible components of \(D\) are compact, so they have compact image in \(D'\).
In particular, since the graphs lie in the smooth locus, they lie in a compact irreducible component of \(D\) mapped onto a compact irreducible component of \(D'\).
\end{proof}

We now prove Theorem~\ref{thm:OneCurveForcesDescent}.
\begin{proof}\label{proof:thm:OneCurveForcesDescent}
By Proposition~\ref{proposition:CartanDescent}, if there is a rational curve in \(M\), then we can drop, say to some geometry \(E \to M'\). 
Since every rational curve lies in a rational envelope in \(M\), all rational curves in \(M\) lie in the fibers of \(M \to M'\). 
If \(M'\) contains a rational curve, we can repeat this process, dropping to some geometry \(E \to M''\), etc. 
At each step, we reduce dimension by at least one, i.e., \(\dim M' \le \dim M - 1\), since the connected components of the fibers of \(M \to M'\) are rational homogeneous varieties containing rational curves. 
Therefore after finitely many drops, we arrive at a drop which contains no rational curves, say \(M \to \overline{M}\), say a drop to a \(G/\overline{H}\)-geometry.

At each stage in the process, the connected components of the fibers of \(M \to M'\), and of \(M' \to M''\), etc. are rational homogeneous varieties, and in particular are rationally connected.
So the connected components of the fibers of \(M \to \overline{M}\) are 
rationally connected and compact. 
These fibers are homogeneous spaces, being copies of \(\overline{H}/H\). 
Therefore the fibers are rational homogeneous varieties by Proposition~\ref{proposition:RationallyConnectedHomogeneous}.
The fibers of \(M \to \overline{M}\) lie inside the rational envelopes of \(M\), and therefore lie on rational curves in \(M\), so \(\overline{M} = M'\), i.e., we need precisely one step in this process to achieve the result that \(M'\) contains no rational curves.

Suppose that \(M \to M''\) is some other drop, and that \(M''\) contains no rational curves. The fibers of \(M \to M'\) are rational homogeneous varieties, so rationally connected \cite[p. 34, Corollary 6.9.1]{Kollar:1996}. 
Each fiber of \(M \to M'\) lies inside a fiber of \(M \to M''\), since otherwise a rational curve in a fiber of \(M \to M'\) would project to a rational curve in \(M''\). Therefore the map \(M \to M''\) factors as a map \(M \to M' \to M''\).
Looking upstairs at \(E\), we have \(M=E/H, M'=E/H', M''=E/H''\), so \(H''\) contains \(H'\), i.e., the drop \(M \to M''\) factors as drops \(M \to M' \to M''\).
\end{proof}

\section{Meromorphic maps and rational curves}%
\label{section:mero.maps}

\begin{lemma}%
\label{lemma:mod}
Suppose that \(f \colon X \to Y\) is a modification of reduced complex spaces with \(Y\) smooth.
Denote by \(E\) the exceptional divisor of \(f\).
Through the generic point of every irreducible component of \(E\), there is a rational curve lying in \(E\), mapping to a single point of \(Y\), so that the deformations of this rational curve through any flat family stay in \(E\).
In particular, such a rational curve is not free in \(X\).
\end{lemma}
\begin{proof}
Replace \(X\) with its normalization \cite[p. 161 Thm. 2]{Grauert/Remmert:1984} to ensure that \(X\) is smooth in codimension 1.
Since \(E\) has codimension 1, we can replace \(Y\) with an open subset to assume that both \(X\) and \(E\)  are smooth and irreducible.

By Hironaka's Chow lemma \cite[p. 293, Cor. 2.9]{SCV:VII}, after perhaps restricting to smaller open sets, we can find closed subvarieties \(X_0\defeq E \subset X\) and \(Y_0 \subset Y\) so that \(X_0=f^{-1}Y_0\) so that the blowup \(p \colon \widetilde{X} \to X\) along \(X_0\) is  isomorphic to the blowup \(p \colon \widetilde{Y} \to Y\) along \(Y_0\): \(\widetilde{X} \cong \widetilde{Y}\).
Let \(\widetilde{X}_0=p^{-1}X_0 \subset \widetilde{X}\) and \(\widetilde{Y}_0=p^{-1}Y_0 \subset \widetilde{Y}\) be the exceptional divisors of the blowups.
{\footnotesize
\[
\begin{tikzcd}
%[row sep=small, column sep=small]
& \widetilde{X}_0 \arrow[swap]{dl}{\sim}  \arrow{rr} \arrow{dd}
& & \widetilde{X} \arrow[swap]{dl}{\sim} \arrow{dd} 
\\
\widetilde{Y}_0 \arrow[crossing over]{rr} \arrow{dd}
& & \widetilde{Y} \arrow[crossing over]{dd} 
\\
& X_0 \arrow{dl} \arrow{rr}
& & X \arrow{dl}
\\
Y_0 \arrow{rr}
& & Y \arrow[crossing over, leftarrow]{uu}
\\
\end{tikzcd}
\]}
Denote the normal space of \(Y_0\) in \(Y\) at \(y_0\) by \(\normalBundle[y_0]{Y_0}{Y}=T_{y_0} Y/T_{y_0} Y_0\).
Suppose that \(Y_0 \subset Y\) has codimension \(k+1\).
As is well known \cite[p. 604]{Griffiths/Harris:1978}, \(p^{-1}\of{y_0} \subset Y_0\) is canonically biholomorphic to \(\Proj{}\normalBundle[y_0]{Y_0}{Y} \cong \Proj{k}\) with
\[
\left.\normalBundle{\widetilde{Y}_0}{\widetilde{Y}}\right|_{p^{-1}\of{y_0}} \cong \OO{-1}.
\]
Therefore \(\widetilde{X}_0=\widetilde{Y}_0\) is covered in rational curves which project to points in \(Y\).

Pick any projective line, linearly embedded in projective space: \(g \colon \Proj{1} \to \Proj{k}=\Proj{}\normalBundle[y_0]{Y_0}{Y}=\widetilde{Y}_{0,y_0}\).
These lines cover the fibers of \(\widetilde{Y}_0 \to Y_0\).
Since the rational curve \(g\) projects to a point \(y_0  \in Y\), any small deformation of \(g\) as a map to \(\tilde{Y}\) maps into a Stein neighborhood of \(y_0 \in Y\), so also has image in \(Y\) a single point.
Therefore the image of any deformation of \(g\) lies inside \(\widetilde{Y}_0=\widetilde{X}_0\).
So the composition \(\bar{g}\) of 
\[
\begin{tikzcd}
\Proj{1} \arrow{r}{g} & \tilde{X} \arrow{r} & X
\end{tikzcd}
\]
has image inside \(E=X_0\).

If \(\bar{g}\) is constant for generic \(g\), then the fibers of \(\tilde{X}_0 \to X_0\) are the same as the fibers of \(\tilde{Y}_0 \to Y_0\) so \(X=Y\), a contradiction.
Suppose that \(\bar{g} \colon \Proj{1} \to X\) is free.
By freedom, there is a section \(v\) of \(\bar{g}^*TX\) not tangent to \(X_0\) at \(x_0\).
Free curves are unobstructed, so this section is the velocity of a curve in the Douady deformation space of the graph of \(\bar{g}\).
Pulling back the universal flat family to that curve makes \(\bar{g}\) a fiber in a ruled surface \(S \to X\).
In particular, the generic ruling line of \(S\) has finite intersection with \(X_0\).
The map \(S \to X\) lifts into a map \(S \to \tilde{X}\) by the universal mapping property of blowups \cite[p. 291, Cor. 2.2]{SCV:VII}.
But \(\tilde{X}=\tilde{Y}\), so this map lies in \(\tilde{Y}\).

In particular if \(\bar{g}\) lies in \(E\) then there is a rational curve in \(\widetilde{Y}_{0,y_0}\) that belongs to a ruled surface \(S\) not contained in \(\widetilde{Y}_0\), contradicting our reasoning above.
\end{proof}

\begin{corollary}\label{cor:BlowsDown.2}
Suppose that \(f \colon X \to Y\) is a modification of reduced complex spaces with \(Y\) smooth.
Suppose that there is a smooth point of \(X\), lying over a smooth point of \(Y\), near which \(f\) is not a local isomorphism.
Then the set of smooth points of \(X\) is not convex.
\end{corollary}

Corollary~\ref{cor:BlowsDown} on page~\pageref{cor:BlowsDown} follows immediately.\label{proof:cor:BlowsDown}

\begin{lemma}\label{lemma:rat.defined}
Suppose that \(M\) is a compact complex manifold containing no rational curves.
Suppose that \(X\) is a reduced and irreducible compact complex space.
Then every meromorphic map \(X \DashedArrow M\) extends to a unique holomorphic map \(X \to M\).
\end{lemma}
\begin{proof}
Let \(X' \subset X \times M\) be the graph of a meromorphic map \(X \DashedArrow M\).
Composing the inclusion and projection \(X' \to X \times M \to X\) gives a proper modification \(X' \to X\).
Assume the exceptional divisor of \(X' \to X\) is nonempty, i.e. \(X' \to X\) is not a biholomorphism.

By lemma~\ref{lemma:mod}, there is a nonfree rational curve on \(X'\) mapping to a point in \(X\) through the generic point of the exceptional divisor.
Every deformation in \(X \times M\) of this rational curve is then just precisely a deformation of the rational curve on \(M\) and a point of \(X\) to map to.
But there are no rational curves on \(M\).
Hence the exceptional divisor of \(X' \to X\) is empty and the map is defined everywhere.
\end{proof}

\begin{corollary}\label{corollary:bimero}
Any bimeromorphism of compact complex manifolds with no rational curves extends to a unique biholomorphism.
\end{corollary}

\section{Moishezon manifolds}

\begin{lemma}\label{lemma:G.P.bundle}
Every holomorphic fiber bundle with rational homogeneous fibers and smooth projective base has smooth projective total space.
\end{lemma}
\begin{proof}
The result is true for bundles with projective space fibers \cite[p. 139 Prop. 4.1]{Barth:2004}.
Take a rational homogeneous variety \(X=G/P\), where we assume \(G\) is the automorphism group of \(X\), hence a complex semisimple Lie group.
Then \(X\) admits a \(G\)-equivariant projective embedding \cite[p. 382]{Fulton/Harris:1991}: let \(P_x\) be the stabilizer of each point \(x \in X\), \(\LieP_x\) its Lie algebra, and map
\[
x \in X \mapsto \Lm{\operatorname{top}}{\LieP_x} \in \Proj{}\of{\Lm{\dim P}{\LieG}}=\Proj{r}.
\]
The total space of any holomorphic \(X\)-bundle is thus a compact embedded complex submanifold of the total space of a \(\Proj{r}\)-bundle.
The total space of the \(\Proj{r}\)-bundle is a smooth projective variety, so the total space of the \(X\)-bundle is a smooth projective variety by the Kodaira embedding theorem.
\end{proof}

We prove Corollary~\ref{corollary:Moishezon} from page~\pageref{corollary:Moishezon}.

\begin{proof}\label{proof:corollary:Moishezon}
Every Moishezon manifold \(M\) is dominated by a smooth projective variety \cite[p. 26, Thm. 3.6]{Ueno:1975} and is therefore Fujiki and so tame.
Every Moishezon manifold \(M\) which is not a smooth projective variety contains a rational curve \cite[p. 17 Thm. 3.1]{Cascini:2013}.
By theorem~\ref{thm:OneCurveForcesDescent}, the geometry on \(M\) drops to a geometry on a lower dimensional compact complex manifold \(M'\), and \(M \to M'\) is a holomorphic bundle of rational homogeneous varieties, and \(M'\) contains no rational curves.
The image \(M'\) of any surjective morphism of complex manifolds \(M \to M'\) from a Moishezon manifold \(M\)  is Moishezon \cite[p. 27, Cor. 3.9]{Ueno:1975}.
But \(M'\) contains no rational curves, so \(M'\) is a smooth projective variety \cite[p. 26, Thm. 3.6]{Ueno:1975} and \(M \to M'\) is a holomorphic fiber bundle with rational homogeneous fibers.
By lemma~\ref{lemma:G.P.bundle}, \(M\) is a smooth projective variety and \(M \to M'\) is a regular fibration of smooth projective varieties.
\end{proof}

We prove Corollary~\vref{corollary:ample}:
\begin{proof}\label{proof:corollary:ample}
Every rational homogeneous variety contains a rational curve with ample ambient tangent bundle \cite[p. 20 Lemma 16]{McKay:2006}, therefore (2) implies (1).

Suppose (1).
The Cartan geometry is locally isomorphic to its model by Corollary~\vref{corollary:ample.rational}.
Verbitsky \cite{Verbitsky:2014} proves that if a complex manifold contains a rational curve with ample normal bundle then it admits an open embedding to a Moishezon manifold.
In our case, since \(M\) is a compact complex manifold, \(M\) is Moishezon.
The manifold \(M\) is a smooth projective variety by Corollary~\vref{corollary:Moishezon}.
Moreover the Cartan geometry on \(M\) drops by Theorem~\vref{thm:OneCurveForcesDescent}, and the rational curve lies in the fiber of the maximal rationally connected fibration.
The ample ambient tangent bundle prevents the rational curve living in a fiber of a fibration unless the fibers are points or the base is a point, since the deformations of the rational curve have velocities spanning the ambient tangent bundle along the curve.
Therefore the model is some compact complex homogeneous space and \(M\) is isomorphic to it.
The model is a rational homogeneous variety by Proposition~\vref{proposition:RationallyConnectedHomogeneous}.
Indeed by the classification of homogeneous projective varieties \cite[p. 216 Thm 4.2]{Khenkin:1990}, there are no rational curves with ample ambient tangent bundle in any homogeneous projective variety \(X\) unless \(X\) is a rational homogeneous variety.
\end{proof}

\section{Meromorphic and holomorphic symmetries}%
\label{section:symmetries}

We prove theorem~\ref{theorem:bimero} from page~\pageref{theorem:bimero}.
\begin{proof}\label{proof:theorem:bimero}
If neither \(M_0\) nor \(M_1\) contain rational curves, the result follows from corollary~\vref{corollary:bimero}. 

By theorem~\vref{thm:FreeRats}, both \(M_0\) and \(M_1\) are convex.
Suppose that \(M_0\) contains a rational curve and that \(M_1\) does not.
By lemma~\vref{lemma:rat.defined}, \(M_0 \DashedArrow M_1\) extends to a unique holomorphic map, a local biholomorphism away from a closed complex hypersurface.
By convexity, \(M_0\) contains a rational curve through the generic point, and in particular contains a rational curve not sitting inside that hypersurface.
Therefore \(M_1\) contains a rational curve, a contradiction.
Vice versa: \(M_0\) contains a rational curve just when \(M_1\) does.

If there are rational curves in both of \(M_0\) and \(M_1\), then by theorem~\ref{thm:OneCurveForcesDescent} we can drop both geometries:
{\footnotesize
\[
\begin{tikzcd}%[column sep=small]
M_0 \arrow[<->,dashed]{r} \arrow{d} & M_1 \arrow{d} \\
M'_0 & M_1'
\end{tikzcd}
\]}
Note that we have yet to determine whether the Cartan geometries on \(M_0'\) and \(M_1'\) have the same model.
The composed meromorphic maps
{\footnotesize
\[
\begin{tikzcd}%[column sep=small]
M_0 \arrow[dashed]{dr} \arrow[<->,dashed]{r} \arrow{d} & M_1 \arrow{d} \arrow[dashed]{dl}\\
M'_0 & M_1'
\end{tikzcd}
\]}
are holomorphic
{\footnotesize
\[
\begin{tikzcd}%[column sep=small]
M_0 \arrow{dr} \arrow[<->,dashed]{r} \arrow{d} & M_1 \arrow{d} \arrow{dl}\\
M'_0 & M_1'
\end{tikzcd}
\]}
\par\noindent{}by lemma~\ref{lemma:rat.defined}.
The generic point of \(M_0\) lies on a rational curve.
Each rational curve in \(M_0\) lies in a fiber of \(M_0 \to M_0'\), and these fibers are rationally connected.
The generic rational curve in \(M_0\) maps meromorphically to \(M_1'\), so the fibers of \(M_0 \to M_1'\) contain the fibers of \(M_0 \to M_0'\).
Therefore these holomorphic maps descend:
{\footnotesize
\[
\begin{tikzcd}%[column sep=small]
M_0 \arrow{dr} \arrow[<->,dashed]{r} \arrow{d} & M_1 \arrow{d} \arrow{dl}\\
M'_0 \arrow[<->]{r} & M_1'
\end{tikzcd}
\]}
The base manifolds are biholomorphic and we can thereby identify them:
{\footnotesize
\[
\begin{tikzcd}%[column sep=small]
M_0 \arrow{dr} \arrow[<->,dashed]{rr} & & M_1 \arrow{dl}\\
& M' & 
\end{tikzcd}
\]}

Careful: for example, if we take \(M_0=\Proj{2}\) and \(M_1=\Proj{1} \times \Proj{1}\), i.e. different models of holomorphic Cartan geometries, these are bimeromorphic, but not biholomorphic, and \(M_0'=M_1'=\Set{*}\) are biholomorphic.

But we are now assuming that the bimeromorphism \(M_0 \DashedArrow[<->] M_1\) is an isomorphism of the Cartan geometries, on dense open sets.
This means precisely that the bimeromorphism \(M_0 \DashedArrow[<->] M_1\) is induced by an \(H\)-equivariant bimeromorphism \(E_0 \DashedArrow[<->] E_1\) identifying the Cartan connections.
Moreover \(E_0 \DashedArrow[<->] E_1\) is \(H'\)-equivariant, because both Cartan geometries drop to the same model.
This bimeromorphism \(E_0 \DashedArrow[<->] E_1\) is invariant under the flows of the vector fields \(\vec{A}\), which have all of \(E_0\) and \(E_1\) as orbits.
Therefore the bimeromorphism extends to \(E_0 \to E_1\) uniquely to an \(H'\)-equivariant biholomorphism.
\end{proof}

\section{Vector fields}

Take a \(C^{\infty}\) Cartan geometry \(\pi \colon E \to M\).
A \(C^{\infty}\) \emph{infinitesimal symmetry} \(v\) of the geometry is a \(C^{\infty}\) vector field on \(E\), commuting with the right \(H\)-action on the bundle \(E \to M\), so that the flow of \(v\) preserves the Cartan connection.
Commutation with the \(H\)-action ensures that there is a unique vector field \(\bar{v}\) on \(M\) so that \(\pi_* v = \bar{v}\).

If \(E \to M\) is a \(C^{\infty}\) Cartan geometry and \(f \colon M_0 \to M\) is a local diffeomorphism, then \(E_0 \defeq f^* E \to M_0\) is the total space of a unique Cartan geometry, the \emph{pullback geometry}, with the same model, with Cartan connection pulled back via the obvious map \(E_0 \to E\).

Pullback the standard flat projective connection on \(\Proj{1}\) to a disk inside an affine chart.
The projective connection is invariant under projective automorphisms, so under all biholomorphisms of the disk.
Therefore the projective connection descends to all connected Riemann surfaces of genus \(\ge 2\).
The local infinitesimal symmetries of the projective connection are the same on these Riemann surfaces as they are on the projective line.
But the global infinitesimal symmetries are trivial on any compact higher genus Riemann surface, because the biholomorphism group of the Riemann surface is finite and the only holomorphic vector field is zero.

\begin{lemma}[Amores \cite{Amores:1979} p. 5 Thm. 3.3]\label{lemma:local.sym}
Take a connected manifold \(M\) with a real analytic \(V,H\)-geometry \(E \to M\) and a connected open set \(U \subset M\).
Restrict the Cartan geometry to \(U\).
Every infinitesimal symmetry of the restricted Cartan geometry pulls back to some covering space of \(M\) and then extends uniquely to an infinitesimal symmetry on that covering space.
\end{lemma}
The idea behind the proof is easy: every infinitesimal symmetry \(v\) satisfies 
\[
0=\left[v,\vec{A}\right]
\]
for every \(A \in \LieG\).
Therefore \(v\) is invariant under the flows of all of these \(\vec{A}\) vector fields.
But these vector fields span the tangent spaces of \(E\), so act locally transitively, extending \(v\) to be defined everywhere on some covering space of \(E\) (to handle potential monodromy), preserving \(\omega\) and commuting with the \(\vec{A}\) by analytic continuation.

\begin{corollary}
On a complex manifold \(M\), every meromorphic infinitesimal symmetry of a holomorphic Cartan geometry is holomorphic.
\end{corollary}
\begin{proof}
The infinitesimal symmetry commutes with the vector fields \(\vec{A}\), so extends to be invariant along their flows, giving a global holomorphic extension.
\end{proof}

\section{Relation to the literature}

Biswas and Bruzzo \cite{Biswas/Bruzzo:2011} proved a special case of the main theorem above.
Suppose that \(M\) is a compact connected K\"ahler manifold with numerically effective tangent bundle. 
Then some finite unramified covering of \(M\) admits a holomorphic surjective submersion to a complex torus \cite{Demailly/Peternell/Schneider:1994} with Fano fibers (and hence rationally connected fibers). 
Biswas and Bruzzo prove that if \(M\) admits a holomorphic Cartan geometry, then these fibers are rational homogeneous varieties, which is a consequence of the main theorem above.

The authors \cite{Biswas/McKay:2009} also previously proved a special case of the main theorem above: rationally connected compact complex manifolds carrying a holomorphic Cartan geometry are rational homogeneous varieties. 
We also proved in that paper that any compact K\"ahler manifold with trivial canonical bundle bearing a holomorphic Cartan geometry admits a finite unramified covering by a complex torus.
Dumitrescu \cite{Dumitrescu:2009} proved the same result for structures of affine type,
and we previously proved the same result for smooth projective varieties \cite{Biswas/McKay:2008}.
If the holomorphic Cartan geometry is parabolic, then it pulls back to a translation
invariant holomorphic Cartan geometry on the torus \cite{McKay:2008c}.
Dumitrescu and McKay conjecture that all flat holomorphic Cartan geometries on any complex torus are translation invariant.

The explicit classification of holomorphic Cartan geometries on uniruled compact complex surfaces \cite{McKay:2011} employs the main theorem above. 
Say that a complex homogeneous space \((X,G)\) is \emph{rigid} among smooth projective varieties (or Moishezon manifolds) if any holomorphic \((X,G)\)-geometry with that model can only exist on a unique smooth projective variety (or Moishezon manifold): the model \(X\), and the complex manifold \(X\) admits a unique holomorphic \((X,G)\)-geometry. 
The main theorem also ensures that certain complex homogeneous spaces \((X,G)\) are rigid among smooth projective varieties \cite{McKay:2011b}.
Corollary~\vref{corollary:Moishezon} extends this rigidity to rigidity among Moishezon manifolds.
A weaker form of rigidity occurs for a larger class of holomorphic Cartan geometries  \cite{McKay:2007}: the relations among characteristic classes on the model are also satisfied on any compact K\"ahler manifold with such a geometry.

\section*{Acknowledgements.}
We are very grateful for the assistance of Najmuddin Fakhruddin, Amit Hogadi, Sergei Ivashkovich, J\'anos Koll\'ar, Andrei Musta\c{t}\u{a} and an anonymous referee.
This publication has emanated from activity conducted with the financial support of Science Foundation Ireland under the International Strategic Cooperation Award Grant Number SFI/13/ISCA/2844.

\end{document}